\newtheorem{thm}{Theorem}[section]
\newtheorem{prop}[thm]{Proposition}
\newtheorem{define}[thm]{Definition}
\newtheorem{lemma}[thm]{Lemma}
\numberwithin{equation}{section}
\subjclass[2000]{35A01, 35B45, 35B65, 76D03, 76D09}
\keywords{2D Boussinesq equation, global regularity, vertical diffusion.}
\begin{document}
\title[The 2D Boussinesq Equations with vertical dissipation]
{Global regularity for the 2D anisotropic Boussinesq Equations with vertical dissipation}
\author[C. Cao and J. Wu]{Chongsheng Cao$^{1}$ and Jiahong Wu$^{2}$}

\address{$^1$ Department of Mathematics,
Florida International University,
Miami, FL 33199, USA.}

\email{caoc@fiu.edu}

\address{$^2$Department of Mathematics,
Oklahoma State University,
401 Mathematical Sciences,
Stillwater, OK 74078, USA.}

\email{jiahong@math.okstate.edu}

\date{\today}

\begin{abstract}
This paper establishes the global in time existence of classical solutions to the 2D anisotropic Boussinesq equations with vertical dissipation. When only the vertical dissipation is present, there is no direct control on the horizontal derivatives and the global regularity problem is very challenging. To solve this problem, we bound the derivatives in terms of the $L^\infty$-norm of the vertical velocity $v$ and prove that $\|v\|_{L^{r}}$ with $2\le r<\infty$ at any time does not grow faster than $\sqrt{r \log r}$ as $r$ increases. A delicate interpolation inequality connecting $\|v\|_{L^\infty}$ and $\|v\|_{L^r}$ then yields the desired global regularity.

\end{abstract}
\maketitle

\section{Introduction}\label{intro}

The Boussinseq equations model many geophysical flows such as atmospheric fronts and ocean circulations (see, e.g., \cite{Maj,Pe}). Mathematically the 2D Boussinesq equations serve as a lower-dimensional model of the 3D hydrodynamics equations. In fact, the 2D Boussinesq equations retain some key features of the 3D Euler and Navier-Stokes equations such as the vortex stretching mechanism and, as pointed out in \cite{MB}, the inviscid 2D Boussinesq equations are identical to the Euler equations for the 3D axisymmetric swirling flows. The fundamental issue of whether classical solutions to the 3D Euler and Navier-Stokes equations can develop finite time singularities remains outstandingly open and the study of the 2D Boussinesq equations may shed light on this extremely challenging problem.

\vskip .1in
This paper addresses the global regularity problem concerning the 2D anisotropic Boussinesq equations with vertical dissipation,
\begin{equation}\label{vbou}
 \left\{
\begin{array}{l}
 u_t+uu_x+vu_y=-p_x+\nu\, u_{yy}, \\
 v_t+uv_x+vv_y=-p_y+\nu\, v_{yy}+\theta, \\
 u_x+v_y=0,\\
 \theta_t+u\theta_x+v\theta_y=\kappa\, \theta_{yy},\\
 u(x,y,0)=u_0(x,y), \quad v(x,y,0)=v_0(x,y), \quad \theta(x,y,0)=\theta_0(x,y),
 \end{array} \right.
\end{equation}
where $u, v, p$ and $\theta$ are scalar functions of $(x,y)\in \mathbb{R}^2$ and $t\ge 0$. Physically, $(u,v)$ denotes the 2D velocity field, $p$ the pressure, $\theta$ the temperature in the content of thermal convection and the density in the modeling of geophysical fluids, $\nu>0$ the viscosity and $\kappa>0$ the thermal diffusivity. (\ref{vbou}) is useful in modeling dynamics of geophysical flows in which the vertical dissipation dominates such as in the large-time dynamics of certain strongly stratified flows (see \cite{MG} and the references therein).

\vskip .1in
The 2D anisotropic Boussinesq system with vertical diffusion is a special case of
the general 2D Boussinesq equations
\begin{equation}\label{diss}
 \left\{
\begin{array}{l}
 u_t+uu_x+vu_y=-p_x+\nu_1\, u_{xx}+\nu_2\, u_{yy}, \\
 v_t+uv_x+vv_y=-p_y+\nu_1\, v_{xx}+\nu_2\, v_{yy}+\theta, \\
 u_x+v_y=0,\\
 \theta_t+u\theta_x+v\theta_y=\kappa_1\, \theta_{xx} + \kappa_2\, \theta_{yy},
 \end{array} \right.
 \end{equation}
where $\nu_1$, $\nu_2$, $\kappa_1$ and $\kappa_2$ are real parameters. \eqref{diss} with $\nu_1=\kappa_1=0$ reduces to \eqref{vbou}. When all four parameters are positive, \eqref{diss} is fully dissipative and the global regularity has been obtained (see, e.g., \cite{CaDi}). On the other hand, if all  parameters are zero, \eqref{diss} reduces to the inviscid Boussinesq equations. As mentioned before, the inviscid Boussinesq equations can be identified with the 3D axisymmetric Euler equations and whether or not their solutions can develop any finite-time singularity remains elusive. Several analytic and numerical results on the inviscid Boussinesq equations are available in \cite{Cha1,ES}.
The intermediate cases when at least one of four parameters in \eqref{diss} is zero has attracted considerable attention in the last few years and important progress has been made (see, e.g., \cite{AbHm,ACW10,ACW11,Ch,DP1, DP2,DP3,HmKe1,HmKe2,HKR1,HKR2,HL,LLT,MX}).  The global regularity for the case when $\nu_1 =\nu_2 >0$ and $\eta_1=\eta_2=0$ was proven by Chae \cite{Ch} and by Hou and Li \cite{HL}. The case when $\nu_1=\nu_2=0$ and $\eta_1=\eta_2>0$ was dealt with by Chae \cite{Ch}. Their results successfully resolved one of the open problems proposed by Moffatt \cite{Mof}. Further progress on these two cases was recently made  by Hmidi, Keraani and Rousset, who were able to establish the global regularity even when the full Laplacian dissipation is replaced by the critical dissipation represented in terms of the operator $\sqrt{-\Delta}$ (\cite{HKR1},\cite{HKR2}). In addition,  Miao and Xue obtained the global regularity of the 2D Boussinesq equations with fractional dissipation and thermal diffusion when the fractional powers obey certain conditions \cite{MX}. The global well-posedness for the anisotropic Boussinesq equations with horizontal dissipation or thermal diffusion, namely (\ref{diss}) with only $\nu_1>0$ or $\kappa_1>0$ was first studied by Danchin and Paicu \cite{DP3}. Recently Larios, Lunasin and Titi \cite{LLT} further investigated the Boussinesq equations with horizontal dissipation via more elementary approaches and re-established the results of Danchin and Paicu under milder assumptions. Other interesting recent results on the 2D Boussinesq equations can be found in \cite{AbHm,ACW10,ACW11,DP1,DP2, HmKe1,HmKe2,MX}.

\vskip .1in
This paper singles out the anisotropic Boussinesq equations with vertical dissipation (\ref{vbou}) for study. Why is the global regularity problem for this case difficult? Although the global (in time) $L^2$-bound follows from an easy energy estimate,  it appears impossible to directly obtain a global $H^1$-bound. There is a simple explanation. The equation satisfied by the vorticity $\omega= v_x -u_y$ is given by
$$
\omega_t + u\omega_x + v\omega_y = \nu_1 \omega_{yy} + \theta_x
$$
and the mismatch between the vertical dissipation $\omega_{yy}$ and the $x$-derivative $\theta_x$ essentially makes the vertical dissipation useless. In two recent papers in collaboration with Adhikari (\cite{ACW10,ACW11}), we attempted to overcome this difficulty and obtained partial results. This paper completely solves the global regularity problem for (\ref{vbou}). Our major result can be stated as follows.
\begin{thm} \label{major}
Consider the initial-value problem for the anisotropic Boussinesq equations with vertical dissipation (\ref{vbou}). Let $\nu>0$ and $\kappa>0$. Let $(u_0, v_0, \theta_0)\in  H^2(\mathbb{R}^2)$. Then, for any $T>0$, (\ref{vbou}) has a unique classical solution $(u, v, \theta)$ on $[0,T]$ satisfying
$$
(u, v, \theta) \in C([0,T]; H^2(\mathbb{R}^2)).
$$
\end{thm}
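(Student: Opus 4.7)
\smallskip
\emph{Proof proposal.} The plan is to obtain a global a priori bound on $\|(u,v,\theta)(t)\|_{H^2}$ on $[0,T]$; combined with standard local well-posedness and a BKM-type continuation criterion, this delivers the theorem. The basic $L^2$ energy identity gives $\|(u,v,\theta)(t)\|_{L^2}$ uniformly bounded with $(u_y,v_y,\theta_y)\in L^2(0,T;L^2)$. Since $\theta$ satisfies a transport--diffusion equation with divergence-free drift, a standard $L^p$ argument yields $\|\theta(t)\|_{L^p}\le \|\theta_0\|_{L^p}$ for every $p\in[2,\infty]$.

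The heart of the argument is the $\sqrt{r\log r}$ bound on $v$ announced in the abstract. The reason to single out $v$ is structural: the equation
\[
v_t + uv_x + vv_y = -p_y + \nu\, v_{yy} + \theta
\]
is forced by $\theta$ itself (not $\theta_x$), and $\theta$ is bounded in every $L^p$. Testing with $|v|^{r-2}v$, eliminating the transport contribution via $u_x+v_y=0$, and bounding the pressure through the elliptic identity $-\Delta p = 2(u_x^2 + u_y v_x) - \theta_y$ together with Calder\'on--Zygmund theory (whose $L^r$ operator norms grow linearly in $r$), I expect a differential inequality for $\|v\|_{L^r}$ whose solution is precisely of order $\sqrt{r\log r}$ uniformly for $t\in[0,T]$. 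Producing the sharp rate (rather than some larger power of $r$) is the main obstacle: any waste in the Calder\'on--Zygmund step or in the H\"older arrangement would inflate the exponent and spoil the final interpolation.

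For the higher-order estimates, assume provisionally that $\|v(t)\|_{L^\infty}$ is controlled on $[0,T]$. Coupling the vorticity equation $\omega_t+u\omega_x+v\omega_y=\nu\omega_{yy}+\theta_x$ with the equations for $\theta_x$ and $\theta_y$, the bad forcing $\theta_x$ is dealt with by pairing $\omega$ against $\theta_x$ and exploiting a structural cancellation, while cross terms like $\int v_x\theta_x\theta_y$ are tamed by a double integration by parts that trades $v_x$ for $v$, the latter then being controlled by $\|v\|_{L^\infty}$ times vertical-derivative quantities already under dissipation. This yields a Gronwall inequality $\frac{d}{dt}\Phi \le C(1+\|v\|_{L^\infty}^\alpha)\Phi$ for some Sobolev quantity $\Phi$ dominating $\|(u,v,\theta)\|_{H^2}^2$. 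To close the loop I would invoke a 2D Gagliardo--Nirenberg inequality
\[
\|v\|_{L^\infty} \le C\,\|v\|_{L^r}^{\,1-2/r}\,\|\nabla v\|_{L^r}^{\,2/r}
\]
with explicit control of $C$ in $r$, and optimize $r$ as a slowly growing function of $\Phi$ (roughly $r\sim \log \Phi$) so that the $\sqrt{r\log r}$ growth of $\|v\|_{L^r}$ forces $\|v\|_{L^\infty}$ to grow only polylogarithmically in $\Phi$. Gronwall then produces a (possibly double-exponential but finite) bound on $\Phi$ over $[0,T]$, completing the a priori estimate and hence the theorem.
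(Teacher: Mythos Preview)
Your overall architecture matches the paper's: control $\|v\|_{L^r}$ with a sharp rate in $r$, feed this into a log-type interpolation to bound $\|v\|_{L^\infty}$ by a polylogarithm of the $H^2$ energy, and close via Gronwall on an $H^2$-level quantity whose evolution is driven by $\|v\|_{L^\infty}^2$ (the paper quotes this last differential inequality from \cite{ACW11}). The $H^2$ part of your sketch and your interpolation step are essentially right in spirit; the paper's interpolation (its Lemma~2.1) is the Besov/Littlewood--Paley version of what you propose with Gagliardo--Nirenberg.

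The genuine gap is in the $\sqrt{r\log r}$ bound. You propose to estimate the pressure term in the $|v|^{r-2}v$ energy identity by Calder\'on--Zygmund theory, tracking the linear-in-$r$ growth of the $L^r$ Riesz constants. That is exactly the route taken in \cite{ACW11}, and it yields only $\|v\|_{L^r}\lesssim r$, not $\sqrt{r\log r}$; with a linear rate the supremum $\sup_{r}\|v\|_{L^r}/\sqrt{r\log r}$ diverges and the interpolation does not close. The paper's improvement does \emph{not} come from sharper CZ constants. Instead it (i) establishes $r$-independent bounds on the pressure, namely $\|p\|_{L^2}$, $\|p\|_{L^4}$ bounded and $\|\nabla p\|_{L^2}\in L^2_t$ (this in turn forces intermediate $L^4$ and $L^8$ estimates on the velocity), and then (ii) splits $p=\bar p+\tilde p$ into low and high frequencies at scale $R$. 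The low-frequency piece is bounded in $L^\infty$ by $C\sqrt{\log R}\,\|p\|_{H^1}$ and the high-frequency piece in $L^4$ by $CR^{-1/2}\|p\|_{H^1}$; these are paired with the dissipation via the anisotropic triple-product inequality (Lemma~\ref{triple}) and Young. After absorbing into the dissipation one gets
\[
\frac{d}{dt}\|v\|_{L^{2r}}^2 \;\lesssim\; (2r-1)\log R\,\|p\|_{H^1}^2 \;+\; (2r-1)\,r\,R^{-1/2}\,\|p\|_{L^4}\,\|p\|_{H^1}^{2}\;+\;\text{lower order},
\]
and choosing $R\sim r^{C}$ makes the right-hand side of order $r\log r$, hence $\|v\|_{L^{2r}}\lesssim\sqrt{r\log r}$ after time integration. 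Without this frequency splitting and the $r$-independent $L^4$/$H^1$ pressure bounds, the $(2r-1)$ prefactor from the integration by parts combines with the $r$ from CZ to give $r^2$ in $\frac{d}{dt}\|v\|_{L^{2r}}^2$, which is precisely the linear bound you would recover. So your proposal, as written, does not contain the mechanism that produces the rate you need; you have correctly identified the obstacle but not the device that overcomes it.
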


In order to prove this theorem, we first discover that the norms of the vertical velocity $v$ in Lebesgue spaces play a crucial role in controlling the Sobolev-norms of the solutions. In fact, it is shown in \cite{ACW11} that
\begin{equation}\label{h2bd}
\|(u, v, \theta)\|_{H^2}^2 + \|\omega^2 + |\nabla\theta|^2\|_{L^2}^2 \le C(\nu,\kappa, T, u_0, v_0, \theta_0)\, \exp\left(\int_0^t \|v(\cdot,\tau)\|^2_{L^\infty}\, d\tau\right).
\end{equation}
We remark that (\ref{h2bd}) involves the estimates of $(u,v,\theta)$ in $L^4$ and ${W^{1,4}}$, which serves as a bridge to the $H^2$-estimate. It does not appear to be plausible to directly show that
$$
\int_0^t \|v(\cdot,\tau)\|^2_{L^\infty}\, d\tau <\infty.
$$
A natural idea is then to estimate $\|v\|_{L^q}$ for $q<\infty$ and obtain upper bounds that are as sharp as possible for large $q$. It was shown in \cite{ACW10} that $\|v\|_{L^q}$ remains finite for all time with an upper bound depending exponentially on $q$. This upper bound was improved to a linear function of $q$ in \cite{ACW11}. We are able to obtain a further improvement in this paper and prove that, for any $2\le q<\infty$,
\begin{equation} \label{qlogq}
\|v(\cdot,t)\|_{L^{q}} \le  B(t) \sqrt{q \log q},
\end{equation}
where $B(t)$ is an explicit integrable function independent of $q$. In order to bound $\|v\|_{L^\infty}$ in terms of $L^q$-bound in \eqref{qlogq}, we prove the following interpolation inequality
\begin{equation} \label{inte}
\|f\|_{L^\infty(\mathbb{R}^2)} \le C \, \sup_{r\ge 2} \frac{\|f\|_{r}}{\sqrt{r \log r }} \,
\left(\log (e + \|f\|_{H^2(\mathbb{R}^2)})\log \log (e + \|f\|_{H^2(\mathbb{R}^2)})\right)^\frac12.
\end{equation}
This delicate inequality together with \eqref{h2bd} and \eqref{qlogq} yields the desired global bound for $\|(u, v, \theta)\|_{H^2}$. The global bound combined with the local existence theory (see, e.g., \cite{CN}) leads to the global regularity result stated in Theorem \ref{major}.

\vskip .1in
Our major effort is devoted to proving the upper bound \eqref{qlogq}. A key ingredient of the proof is the global bounds on the pressure $p$,
$$
\|p(\cdot,t)\|_{L^2} \le C, \qquad \|p(\cdot,t)\|_{L^4} \le C, \qquad \int_0^t \|\nabla p(\cdot, \tau)\|^2_{L^2} \,d\tau \le C,
$$
where $C=C(\nu,\kappa,t, u_0,v_0,\theta_0)$ is a smooth function of $t$ that depends on the parameters $\nu$, $\kappa$ and the initial norm $\|(u_0, v_0, \theta_0)\|_{H^2}$. These bounds for the pressure, in turn, require suitable estimates for $(u, v, \theta)$ in $L^4$ and $L^8$. Another crucial technique is the decomposition of the pressure into low frequency and high frequency parts, which are bounded differently. The separation of the low-high frequencies appears to be necessary in securing a bound of the form in \eqref{qlogq}. The proof of the interpolation inequality \eqref{inte} involves the Littlewood-Paley decomposition and Besov space tools.

\vskip .1in
The rest of the paper is divided into three sections and two appendices. The second section proves the interpolation inequality (\ref{inte}), an inequality for a triple product and several estimates for the low and high frequency parts of a $H^1$-function. The third section establishes several global bounds for the pressure. They rely on the $L^4$ and $L^8$ bounds of the solution. The last section presents the proof of Theorem \ref{major}. The key is the global {\it a priori} bound (\ref{qlogq}), whose detailed proof is also provided in this section. Appendix A contains the description of the Littlewood-Paley decomposition, the Besov spaces, the Triebel-Lizorkin spaces and related facts used in the previous sections. Appendix B provides the technical proof for an inequality presented in the second section.  Throughout the rest of this paper, $\|f\|_{L^r}$ or simply $\|f\|_r$ denotes the norm in the Lebesgue space $L^r$, $\|f\|_{H^s}$ and $\|f\|_{\mathring{H}^s}$ denote the norms in the Sobolev space $H^s$ and the homogeneous Sobolev space $\mathring{H}^s$, respectively.

\vskip .3in
\section{Interpolation inequality and other tools}
\label{tools}

This section presents several inequalities to be used in the subsequent sections. They include the interpolation inequality stated in (\ref{inte}), an inequality for a triple product and suitable bounds for the low and high frequency parts of a $H^1$-function. Some of the proofs involve the Littlewood-Paley decomposition, Besov spaces, Triebel-Lizorkin spaces and related techniques, which are described in Appendix A.

\vskip .1in
\begin{lemma} \label{login}
Let $s>1$ and $f\in H^s(\mathbb{R}^2)$. Assume that
$$
\sup_{r\ge 2} \frac{\|f\|_{r}}{\sqrt{r \log r }} <\infty.
$$
Then there exists a constant $C$ depending on $s$ only such that
\begin{equation} \label{gelog}
\|f\|_{L^\infty(\mathbb{R}^2)} \le C \, \sup_{r\ge 2} \frac{\|f\|_{r}}{\sqrt{r \log r }} \,
\left(\log (e + \|f\|_{H^s(\mathbb{R}^2)})\log \log (e + \|f\|_{H^s(\mathbb{R}^2)})\right)^\frac12.
\end{equation}
\end{lemma}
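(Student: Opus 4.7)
The plan is to establish the inequality via a Littlewood--Paley decomposition, splitting the frequencies of $f$ at an optimally chosen scale and estimating the low- and high-frequency pieces separately. Set $M := \sup_{r \ge 2} \|f\|_{L^r}/\sqrt{r \log r}$ and $H := \|f\|_{H^s}$, and let $\Delta_j$ and $S_N$ denote the Littlewood--Paley blocks and the low-pass projection onto frequencies $|\xi| \le C\, 2^N$ described in Appendix A. For an integer $N \ge 2$ to be chosen, start from
$$
\|f\|_{L^\infty} \le \|S_N f\|_{L^\infty} + \sum_{j \ge N} \|\Delta_j f\|_{L^\infty}.
$$

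For the low-frequency term, the two-dimensional Bernstein inequality together with the uniform $L^r$-boundedness of $S_N$ gives $\|S_N f\|_{L^\infty} \le C\, 2^{2N/r}\, \|f\|_{L^r}$ for every $2 \le r < \infty$. The key device is to tie $r$ to the cut-off: taking $r = N$ makes the prefactor $2^{2N/r} = 4$ harmless, and the growth hypothesis on $\|f\|_{L^r}$ then yields $\|S_N f\|_{L^\infty} \le C\, M\, \sqrt{N \log N}$. It is precisely this coupling of $r$ with $N$ that turns the ambient $\sqrt{r \log r}$ growth into the eventual iterated-log refinement.

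For the high-frequency tail I would use Bernstein in the form $\|\Delta_j f\|_{L^\infty} \le C\, 2^j \|\Delta_j f\|_{L^2}$, then Cauchy--Schwarz in $j$ together with the hypothesis $s > 1$ to sum the resulting geometric series:
$$
\sum_{j \ge N} \|\Delta_j f\|_{L^\infty} \le C \Bigl(\sum_{j \ge N} 2^{-2(s-1)j}\Bigr)^{1/2} \Bigl(\sum_{j \ge N} 2^{2sj}\|\Delta_j f\|_{L^2}^2\Bigr)^{1/2} \le C_s\, 2^{-(s-1)N}\, H.
$$

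Finally I balance the two bounds by taking $N = \lceil c_s \log(e+H) \rceil$ with $c_s = 2/((s-1)\log 2)$, so that $2^{-(s-1)N} H \le 1/(e+H)$ is absorbed into an $s$-dependent absolute constant while $\sqrt{N \log N}$ is, up to an $s$-dependent factor, equivalent to $\sqrt{\log(e+H) \log \log(e+H)}$. Substituting back produces the desired estimate modulo a residual additive constant that is swallowed by the leading term in the nontrivial regime. The hard part will be this final balancing step: one has to guarantee $N \ge 2$ (adjusting $c_s$ or handling the small-$H$ case by Sobolev embedding if necessary), keep careful track of the Bernstein and Cauchy--Schwarz constants, and verify that with the chosen $N$ the quantity $\sqrt{N \log N}$ really does have the advertised $\sqrt{\log\log}$ asymptotic.
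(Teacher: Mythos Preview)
Your proposal is correct and follows essentially the same route as the paper: Littlewood--Paley splitting at scale $N$, Bernstein plus the choice $r=N$ for the low-frequency piece, Bernstein plus a geometric sum (the paper writes this step without the explicit Cauchy--Schwarz, but the content is identical) for the high-frequency tail, and then $N\sim \frac{1}{s-1}\log_2(e+\|f\|_{H^s})$ to balance. The only differences are cosmetic---your constant $c_s$ is twice the paper's and you spell out the Cauchy--Schwarz and the small-$H$ caveat more carefully.
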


When $s=2$, (\ref{gelog}) reduces to (\ref{inte}). The proof of this lemma involves the Littlewood-Paley decomposition, Bernstein's inequality and the identification of the inhomogeneous Besov space $B^s_{2,2}$ with $H^s$.

\vskip .1in
\begin{proof}[Proof of Lemma \ref{login}]
By the Littlewood-Paley decomposition, we can write
$$
f = S_{N+1}f+\sum_{j=N+1}^\infty \Delta_j f,
$$
where $\Delta_j$ denotes the Fourier localization operator and
$$
S_{N+1} = \sum_{j=-1}^{N} \Delta_j.
$$
The definitions of $\Delta_j$ and $S_{N}$ are now standard and can be found in several books and many papers (see, e.g., \cite{BL,Che,RS,Tri}). For reader's convenience, they are provided in Appendix A. Therefore,
$$
\|f\|_{\infty} \le \|S_{N+1}f\|_\infty + \sum_{j=N+1}^\infty \|\Delta_j f\|_\infty.
$$
We denote the terms on the right by $I$ and $II$. By Bernstein's inequality (see Appendix A), for any $q\ge 2$,
$$
|I| \le  2^{\frac{2N}{q}} \|S_{N+1}f\|_q \le 2^{\frac{2N}{q}} \|f\|_{q}.
$$
Taking $q=N$, we have
$$
|I| \le 4 \|f\|_{N} \le 4 \sqrt{N \log N}\, \sup_{r\ge 2} \frac{\|f\|_{r}}{\sqrt{r\log r}}.
$$
By Bernstein's inequality again, for any $s>1$,
\begin{eqnarray*}
|II| &\leq & \sum_{j=N+1}^\infty 2^j \|\Delta_jf\|_2 =
\sum_{j=N+1}^\infty 2^{-j(s-1)}\,2^{sj}\|\Delta_jf\|_2\\
&=& C\, 2^{-(N+1)(s-1)}\, \|f\|_{B^s_{2,2}}.
\end{eqnarray*}
where $C$ is a constant depending on $s$ only.  By identifying $B^s_{2,2}$ with $H^s$, we obtain
\begin{eqnarray*}
\|f\|_{\infty}\leq 4 \sqrt{N \log N}\,
\sup_{r\ge 2} \frac{\|f\|_{r}}{\sqrt{r\log r}}  +
C\,2^{-(N+1)(s-1)}\, \|f\|_{H^s}.
\end{eqnarray*}
We obtain the desired inequality \eqref{gelog} by taking
$$
N =\left[\frac{1}{s-1} \log_2(e+\|f\|_{H^s})\right],
$$
where $\left[a\right]$ denotes the largest integer less than or equal to $a$.
\end{proof}

\vskip .1in
The next lemma bounds the triple product in terms of the Lebesgue norms of the functions and their directional derivatives.
\begin{lemma} \label{triple}
Let $q\in [2,\infty)$. Assume that $f, g, g_y, h_x\in L^2(\mathbb{R}^2)$ and $h\in L^{2(q-1)}(\mathbb{R}^2)$. Then
\begin{equation} \label{qtri}
 \iint_{\mathbb{R}^2}| f \, g\, h|  \;dxdy  \le C \, \|f\|_2 \, \|g\|_{2}^{1-\frac1q} \|g_y\|_2^{\frac1q} \,\|h\|_{{2(q-1)}}^{1-\frac1q} \|h_x\|_2^{\frac1q}.
\end{equation}
where $C$ is a constant depending on $q$ only. Two special cases of (\ref{qtri}) are
\begin{equation} \label{1/3}
\iint | f \, g\, h|  \;dxdy
\leq C\; \|f\|_2 \; \|g\|_2^{\frac23}   \; \|g_y\|_2^{\frac13}   \; \|h\|_4^{\frac23}
\; \| h_{x}\|_2^{\frac13}
\end{equation}
and
\begin{equation} \label{1/2}
\iint | f \, g\, h|  \;dxdy
\leq C\; \|f\|_2 \; \|g\|_2^{\frac12}   \; \|g_y\|_2^{\frac12}   \; \|h\|_2^{\frac12}
\; \| h_{x}\|_2^{\frac12}.
\end{equation}
\end{lemma}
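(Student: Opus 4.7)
The plan is to apply the Cauchy-Schwarz inequality first to isolate $\|f\|_2$, and then to estimate $\|gh\|_{L^2(\mathbb{R}^2)}^2=\iint g^2h^2\,dxdy$ by an anisotropic argument: bound $h$ in $L^\infty_x$ via a one-dimensional Gagliardo-Nirenberg inequality using $h_x$ (exploiting the horizontal derivative hypothesis), and control the resulting slice norm $\|g(\cdot,y)\|_{L^2_x}$ using $g_y$ (exploiting the vertical derivative hypothesis) through the fundamental theorem of calculus combined with standard $L^p_y$-interpolation. The special cases \eqref{1/3} and \eqref{1/2} will then follow from \eqref{qtri} by taking $q=3$ and $q=2$.

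In detail, Cauchy-Schwarz gives $\iint|fgh|\,dxdy\le\|f\|_2\|gh\|_2$, so it suffices to bound $\|gh\|_2^2$. For almost every $y$, I would use $\int g^2h^2\,dx\le\|g(\cdot,y)\|_{L^2_x}^2\|h(\cdot,y)\|_{L^\infty_x}^2$ together with the scaling-forced 1D Gagliardo-Nirenberg inequality
\[
\|h(\cdot,y)\|_{L^\infty_x}\le C\,\|h(\cdot,y)\|_{L^{2(q-1)}_x}^{(q-1)/q}\,\|h_x(\cdot,y)\|_{L^2_x}^{1/q}.
\]
For the slice norm $\phi(y):=\|g(\cdot,y)\|_{L^2_x}^2$, differentiating yields $|\phi'(y)|\le 2\|g(\cdot,y)\|_{L^2_x}\|g_y(\cdot,y)\|_{L^2_x}$, so $\sup_y\phi(y)\le 2\|g\|_2\|g_y\|_2$ by the fundamental theorem of calculus, while trivially $\int\phi(y)\,dy=\|g\|_2^2$. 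Interpolating between $L^1_y$ and $L^\infty_y$ then gives, for every $\alpha\in[1,\infty]$,
\[
\Bigl(\int\phi(y)^\alpha\,dy\Bigr)^{1/\alpha}\le \bigl(2\|g\|_2\|g_y\|_2\bigr)^{1-1/\alpha}\|g\|_2^{2/\alpha}.
\]

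I would then integrate the inner estimate in $y$ and apply H\"older with exponent triple $(\alpha,q,q)$, where $\alpha=q/(q-2)$ (so $1/\alpha+2/q=1$; the case $q=2$ corresponds to $\alpha=\infty$, in which the pointwise sup is used directly). The two $L^q_y$-factors reconstitute $\|h\|_{L^{2(q-1)}(\mathbb{R}^2)}^{2(q-1)/q}$ and $\|h_x\|_2^{2/q}$ exactly, while the $L^\alpha_y$-factor combined with the previous interpolation produces $\|g\|_2^{2(q-1)/q}\|g_y\|_2^{2/q}$ (using $1-1/\alpha=2/q$ and $2/\alpha+(1-1/\alpha)=1+1/\alpha=2(q-1)/q$). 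Taking square roots and multiplying by $\|f\|_2$ finishes the proof.

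The only genuinely delicate point is the bookkeeping of H\"older exponents in the final step; the choice $\alpha=q/(q-2)$ is forced by the requirement that $\|h_x\|_2$ appear with the correct power $1/q$, and the scaling of \eqref{qtri} provides a useful consistency check on the remaining exponents. Once the H\"older triple is identified, the remaining calculations are entirely routine.
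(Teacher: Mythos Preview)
Your proof is correct and follows essentially the same anisotropic strategy as the paper: H\"older in $x$ to pull out $\sup_x|h|$, the one-dimensional Gagliardo--Nirenberg inequality $\|h(\cdot,y)\|_{L^\infty_x}\le C\|h(\cdot,y)\|_{L^{2(q-1)}_x}^{1-1/q}\|h_x(\cdot,y)\|_{L^2_x}^{1/q}$, and then H\"older in $y$ with the same exponent $\alpha=q/(q-2)$ on the $g$-slice term. The only difference is in how the mixed norm $\bigl(\int_y(\int_x g^2\,dx)^{\alpha}\,dy\bigr)^{1/\alpha}$ is controlled: the paper applies Minkowski's inequality to swap the order of integration and then uses a one-dimensional Gagliardo--Nirenberg inequality in~$y$, whereas you take the slightly more elementary route of $L^1_y$--$L^\infty_y$ interpolation on $\phi(y)=\|g(\cdot,y)\|_{L^2_x}^2$ together with the fundamental-theorem-of-calculus bound $\sup_y\phi\le 2\|g\|_2\|g_y\|_2$; both produce exactly the factor $\|g\|_2^{1-1/q}\|g_y\|_2^{1/q}$.
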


This lemma generalizes an inequality in \cite{CaoWu}. It is clear from (\ref{qtri}) that, as $q$ increases, $\|h\|_{{2(q-1)}}$ absorbs a higher power while the power of $\|h_x\|_{2}$ decreases. This generalization allows us to select suitable $q$'s to obtain our desired estimates. The two particular special cases in \eqref{1/3} and \eqref{1/2} will be very useful in the proof of Theorem \ref{major} in Section \ref{majorproof}. The proof of Lemma \ref{triple} will be provided in Appendix B.

\vskip .1in
The next two lemmas bound the norms of the low and high frequency parts of a $H^1$ function in Lebesgue spaces. We recall the definition of the Fourier and the inverse Fourier transforms:
\begin{eqnarray*}
\mathcal{F}f (\xi) = \widehat{f}(\xi) = (2\pi)^{-\frac{d}{2}} \int_{\mathbb{R}^d} e^{-ix\cdot \xi} f(x)\,dx, \\
\mathcal{F}^{-1} f(x) =\breve{f}(x)  = (2\pi)^{-\frac{d}{2}} \int_{\mathbb{R}^d} e^{ix\cdot \xi} f(\xi)\,d\xi.
\end{eqnarray*}
\begin{lemma} \label{ha}
Let $f\in H^1(\mathbb{R}^2)$. Let $R>0$. Denote by $B(0,R)$ the ball centered at zero with radius $R$ and by $\chi_{B(0,R)}$ the characteristic function on $B(0,R)$. Write
\begin{equation}\label{dec}
f = \overline{f} + \widetilde{f}\quad\mbox{with}\quad \overline{f}= \mathcal{F}^{-1}(\chi_{B(0,R)} \mathcal{F}f) \quad\mbox{and}\quad \widetilde{f}= \mathcal{F}^{-1}((1-\chi_{B(0,R)}) \mathcal{F}f).
\end{equation}
Then we have the following estimates for $\overline{f}$ and $\widetilde{f}$.
\begin{enumerate}
\item[(1)] There exists a pure constant $C$ independent of $f$ and $R$ such that
\begin{equation}\label{goo}
\|\overline{f}\|_{L^\infty(\mathbb{R}^2)} \le C\, \sqrt{\log R} \, \|f\|_{H^1(\mathbb{R}^2)}.
\end{equation}
\item[(2)] For any $2\le q<\infty$, there is a constant independent of $q$, $R$ and $f$ such that
\begin{equation}\label{good}
\|\widetilde{f}\|_{L^{q}(\mathbb{R}^2)} \le C\,\frac{q}{R^{\frac2{q}}} \, \|\widetilde{f}\|_{H^1(\mathbb{R}^2)} \le C\,\frac{q}{R^{\frac2{q}}} \, \|f\|_{H^1(\mathbb{R}^2)}
\end{equation}
In particular, for $q=4$,
$$
\|\widetilde{f}\|_{L^4(\mathbb{R}^2)} \le \frac{C}{\sqrt{R}} \, \|f\|_{H^1(\mathbb{R}^2)}.
$$
\end{enumerate}
\end{lemma}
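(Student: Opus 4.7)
\medskip

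\noindent\textbf{Proof proposal for Lemma \ref{ha}.}

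For part (1), the plan is to exploit compact Fourier support via Cauchy--Schwarz. Since $\widehat{\overline{f}} = \chi_{B(0,R)}\,\widehat{f}$ is integrable, we can write $\overline{f}$ pointwise as an inverse Fourier integral and bound
\[
|\overline{f}(x)| \le (2\pi)^{-1}\int_{B(0,R)} |\widehat{f}(\xi)|\,d\xi
\le (2\pi)^{-1}\left(\int_{B(0,R)} \frac{d\xi}{1+|\xi|^2}\right)^{1/2}
\left(\int_{\mathbb{R}^2} (1+|\xi|^2)|\widehat{f}(\xi)|^2\,d\xi\right)^{1/2}.
\]
The second factor is exactly $\|f\|_{H^1}$. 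For the first factor, switching to polar coordinates gives $\pi\log(1+R^2)$, which is bounded by a constant times $\log R$ for $R$ large. Taking square roots and then the supremum in $x$ yields \eqref{goo}.

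For part (2), the plan is to use a dyadic Littlewood--Paley decomposition together with Bernstein's inequality. Since $\widehat{\widetilde{f}}$ is supported in $\{|\xi|\ge R\}$, only the blocks $\Delta_j$ with $2^j\gtrsim R$ (say $2^{j}\ge R/4$) contribute to $\widetilde{f}$, and hence
\[
\|\widetilde{f}\|_{L^q(\mathbb{R}^2)} \le \sum_{2^j\ge R/4} \|\Delta_j \widetilde{f}\|_{L^q}.
\]
Bernstein's inequality gives $\|\Delta_j \widetilde{f}\|_{L^q}\le C\,2^{2j(\frac12-\frac1q)}\|\Delta_j \widetilde{f}\|_{L^2}$, and a frequency localization argument shows $\|\Delta_j \widetilde{f}\|_{L^2}\le C\,2^{-j}\|\Delta_j \widetilde{f}\|_{H^1}$, hence $\le C\,2^{-j}\|\widetilde{f}\|_{H^1}$. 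Combining these two estimates, the summand becomes $C\,2^{-2j/q}\|\widetilde{f}\|_{H^1}$, so
\[
\|\widetilde{f}\|_{L^q} \le C\,\|\widetilde{f}\|_{H^1}\sum_{2^j\ge R/4} 2^{-2j/q}
\le C\,\|\widetilde{f}\|_{H^1}\cdot\frac{R^{-2/q}}{1-2^{-2/q}}.
\]
Finally, one observes $1-2^{-2/q}\ge c/q$ for $q\ge 2$ (since $2^{-2/q}=e^{-(2\log 2)/q}$ and $1-e^{-x}\ge x/2$ for small $x\ge 0$), which produces the claimed factor $q$. The bound $\|\widetilde{f}\|_{H^1}\le\|f\|_{H^1}$ is immediate from Parseval and $|1-\chi_{B(0,R)}|\le 1$. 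The case $q=4$ follows by plugging in and noting $4/R^{1/2}\le C/\sqrt{R}$.

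The main obstacle is getting the sharp $q$-dependence in part (2): a naive interpolation (Gagliardo--Nirenberg plus the $L^2$-gain $\|\widetilde{f}\|_{L^2}\le R^{-1}\|\widetilde{f}\|_{H^1}$) only yields $\sqrt{q}$ rather than $q$. The Littlewood--Paley/Bernstein summation is what supplies the extra $\sqrt{q}$ through the geometric series via the expansion $1-2^{-2/q}\sim (2\log 2)/q$; checking this elementary estimate uniformly for $q\in[2,\infty)$ is the technical point that drives the final constant.
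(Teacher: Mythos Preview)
Your argument is correct and, for part~(1), is line for line the paper's proof. For part~(2) both you and the paper use Littlewood--Paley plus Bernstein, but the bookkeeping differs: the paper first invokes the embedding $\|g\|_{L^q}\le Cq\,\|g\|_{\mathring{B}^0_{q,2}}$ (so the factor $q$ appears at the outset as a quoted embedding constant) and then bounds the $\ell^2$-Besov norm by simply extracting $2^{-4j_0/q}$ from $\sum_{j\ge j_0}2^{-4j/q}\,2^{2j}\|\Delta_j\widetilde f\|_{L^2}^2$ --- no geometric series is summed. Your version instead uses the triangle inequality, the crude per-block bound $\|\Delta_j\widetilde f\|_{H^1}\le\|\widetilde f\|_{H^1}$, and recovers the factor $q$ from the explicit tail $\sum_{j\ge j_0}2^{-2j/q}\sim q\,2^{-2j_0/q}$; this is arguably more self-contained since the $q$-dependence of the embedding constant never has to be quoted. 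One minor slip in your closing paragraph: a bound with $\sqrt{q}$ in place of $q$ would be \emph{stronger} than \eqref{good}, not weaker, so the Gagliardo--Nirenberg route you sketch would in fact also prove the lemma as stated.
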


\begin{proof}The proof of (\ref{goo}) is very easy.
\begin{eqnarray*}
\|\overline{f}\|_{L^\infty} \le \|\widehat{\overline{f}}\|_{L^1}
&=& \int_{|\xi|\le R} |\widehat{\overline{f}}(\xi)|\,d\xi\\
&=& \int_{|\xi|\le R} (1+ |\xi|^2)^{-\frac12} (1+ |\xi|^2)^{\frac12} |\widehat{\overline{f}}(\xi)| \,d\xi \\
&\le& C\, \sqrt{\log R} \, \|f\|_{H^1}.
\end{eqnarray*}
To prove (\ref{good}), we first recall the embedding relations: for any $1\le q <\infty$,
$$
\mathring{B}^0_{q, \min\{q,2\}} \hookrightarrow L^q \hookrightarrow \mathring{B}^0_{q, \max\{q,2\}},
$$
where $\mathring{B}^s_{q,r}$ denotes the homogenous Besov space (see Appendix A).  In particular, for $2\le q <\infty$,
$$
\|\widetilde{f}\|_{L^q} \le C\,q\, \|\widetilde{f}\|_{\mathring{B}^0_{q,2}} =C\,q\, \left[\sum_{j=-\infty}^\infty \|\Delta_j \widetilde{f}\|^2_{L^q}\right]^{\frac12},
$$
where $C$ is a constant independent of $q$.
By definition,
$$
\widehat{\Delta_j \widetilde{f}}(\xi) = \widehat{\Phi}_j (\xi) \widehat{\widetilde{f}}(\xi), \qquad \mbox{supp} \widehat{\Phi}_j \subset \{\xi: \, 2^{j-1} <|\xi| \le 2^{j+1}\}.
$$
Since supp$\widehat{\widetilde{f}} \subset \{\xi: |\xi|\ge R\}$, $\Delta_j \widetilde{f} \equiv 0$
for $j\le j_0 \equiv [\log_2 R] - 1$. By Bernstein's inequality,
\begin{eqnarray*}
\|\widetilde{f}\|_{L^q} &\le&  C\,q\,\left[\sum_{j=j_0}^{\infty}  \|\Delta_j \widetilde{f}\|^2_{L^q}\right]^\frac12  \le C\,q\,\left[\sum_{j=j_0}^{\infty} 2^{4j(\frac12-\frac1q)} \|\Delta_j \widetilde{f}\|^2_{L^2}\right]^\frac12
\\
&\le& C\,q\, 2^{-\frac{2j_0}{q}}\,\|\widetilde{f}\|_{H^1} \le \frac{C\,q}{R^{\frac2{q}}} \, \|\widetilde{f}\|_{H^1}.
\end{eqnarray*}
This completes the proof of Lemma \ref{ha}.
\end{proof}

\begin{lemma} \label{lpt}
Let $1<q<\infty$. Let $f\in L^q(\mathbb{R}^d)$ and let $\widetilde{f}$ be defined as in (\ref{dec}). Then, for a constant $C$ depending on $q$ only such that
$$
\|\widetilde{f}\|_{L^q(\mathbb{R}^d)} \le C\, \|f\|_{L^q(\mathbb{R}^d)}.
$$
\end{lemma}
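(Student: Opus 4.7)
The plan is to use the Littlewood-Paley apparatus recalled in Appendix A together with the Littlewood-Paley characterization of $L^q$ for $1<q<\infty$, namely that $\|g\|_{L^q}$ is comparable to $\|(\sum_j |\Delta_j g|^2)^{1/2}\|_{L^q}$ with constants depending only on $q$. The key observation is that $\widehat{\widetilde f}$ vanishes on $B(0,R)$, so only Littlewood-Paley blocks at scales comparable to or larger than $R$ can survive, which reduces the problem to a controlled frequency range.

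Setting $j_0=[\log_2 R]$, I would analyze $\Delta_j \widetilde f$ in three regimes based on the support of $\widehat{\Phi_j}$. For $j\le j_0-2$ the frequency annulus sits inside $B(0,R)$, so $\Delta_j\widetilde f\equiv 0$. For $j\ge j_0+2$ the multiplier $1-\chi_{B(0,R)}$ is identically $1$ on the annulus, so $\Delta_j\widetilde f=\Delta_j f$. Only a uniformly bounded number of transitional indices $j\sim j_0$ remain, and for these I would rescale $x\mapsto Rx$ to reduce the question to the case $R=1$; the estimate then becomes uniform boundedness on $L^q$ of a fixed finite family of Fourier multipliers with compactly supported symbols, which I would verify directly by a kernel representation.

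Assembled through the square-function characterization, this gives
$$
\|\widetilde f\|_{L^q}\le C_q\,\Bigl\|\Bigl(\sum_j |\Delta_j\widetilde f|^2\Bigr)^{1/2}\Bigr\|_{L^q}\le C_q\,\Bigl\|\Bigl(\sum_j|\Delta_j f|^2\Bigr)^{1/2}\Bigr\|_{L^q}\le C_q\,\|f\|_{L^q},
$$
the middle inequality absorbing the $O(1)$ transitional blocks into a $q$-dependent constant. The main obstacle is precisely these transitional blocks: because $\chi_{B(0,R)}$ carries a sharp boundary along $\partial B(0,R)$, the standard Mikhlin multiplier theorem does not cover them directly, and the rescaling-plus-kernel argument on a fixed finite family of multipliers is the cleanest way to bypass this difficulty while keeping the constant independent of $R$.
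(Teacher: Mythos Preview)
Your overall framework---the Littlewood--Paley square-function characterization of $L^q$---is exactly what the paper uses, phrased there as the equivalence $L^q \sim F^0_{q,2}$. You are in fact more careful than the paper in isolating the transitional blocks $j \sim j_0$: the paper's chain of inequalities passes from $\sum_j |\Delta_j \widetilde f|^2$ to $\sum_{j\ge j_0} |\Delta_j f|^2$ as if $\Delta_j \widetilde f = \Delta_j f$ for all $j \ge j_0$, silently ignoring precisely the blocks you flag.

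That said, your proposed handling of those blocks has a genuine gap. After rescaling to $R=1$, the relevant multiplier is $m_j(\xi) = \widehat{\Phi}_j(\xi)\,(1-\chi_{B(0,1)}(\xi))$, which is compactly supported but jumps across the unit sphere. A kernel argument would require $\mathcal F^{-1} m_j \in L^1(\mathbb R^d)$, and this fails: the sharp spherical cutoff forces the kernel to decay only like $|x|^{-(d+1)/2}$ at infinity, which is not integrable on $\mathbb R^d$. More to the point, for $d\ge 2$ the statement itself is \emph{false}: boundedness of $f\mapsto\widetilde f$ on $L^q$ would give boundedness of $f\mapsto \overline f = f-\widetilde f$, i.e.\ of the ball multiplier $\chi_{B(0,R)}(D)$, which Fefferman's theorem rules out for every $q\ne 2$. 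So neither your argument nor the paper's can be completed as written; the natural repair is to replace the characteristic function in (\ref{dec}) by a smooth cutoff at scale $R$, after which Mikhlin (or a genuine $L^1$-kernel bound) applies directly and the application in Section~\ref{majorproof} goes through unchanged.
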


\begin{proof}
For any $1<q<\infty$, we have the equivalence relation  $L^q \sim F^0_{q,2}$, where
$F^s_{q,r}$ denotes the Triebel-Lizorkin space containing tempered distributions $f$ such that
$$
\|f\|_{F^s_{q,r}} \equiv \|2^{sj} |\Delta_j f|\|_{L^q(l^r)} =\left\| \left[\sum_{j={-1}}^\infty 2^{sjr} |\Delta_j f|^r \right]^{\frac1r}  \right\|_{L^q} <\infty.
$$
More information on Triebel-Lizorkin spaces can be found in Appendix A and some books (see, e.g., \cite{RS,Tri}). Therefore, for two constants $C_1$ and $C_2$ depending on $q$ only,
\begin{equation}\label{jjj}
C_1 \|f\|_{F^0_{p,2}} \le \|f\|_{L^p} \le C_2 \|f\|_{F^0_{p,2}}.
\end{equation}
By the definition of $\widetilde{f}$, $\Delta_j \widetilde{f} =0$ for $j\le j_0 = [\log R]-1$.  Applying (\ref{jjj}), we have
\begin{eqnarray*}
\|\widetilde{f}\|_{L^p} &\le& C_2 \|\widetilde{f}\|_{F^0_{p,2}} =C_2\, \left\| \left[\sum_{j={-1}}^\infty 2^{2sj} |\Delta_j \widetilde{f}|^2 \right]^{\frac12}  \right\|_{L^p}
\le  C_2\,  \left\| \left[\sum_{j={j_0}}^\infty 2^{2sj} |\Delta_j f|^2 \right]^{\frac12}  \right\|_{L^p} \\
&\le& C_2\,  \left\| \left[\sum_{j={-1}}^\infty 2^{2sj} |\Delta_j f|^2 \right]^{\frac12}  \right\|_{L^p} \le C_1 C_2\|f\|_{L^p}.
\end{eqnarray*}
This completes the proof of Lemma \ref{lpt}.
\end{proof}

\vskip .3in
\section{Global bounds for the pressure}

This section establishes several global {\it a priori} bounds for the pressure.
These bounds are crucial in the proof of the global bound \eqref{qlogq},
which, in turn, is one of the major components in the proof of
Theorem \ref{major}. These bounds for the pressure are simultaneously obtained with the $L^4$ and $L^8$ bounds of $(u,v,\theta)$. We state these bounds in two propositions.

\begin{prop} \label{ppr1}
Let $(u_0, v_0, \theta_0)\in H^2(\mathbb{R}^2)$ and let $(u, v, \theta)$ be the corresponding classical solution of (\ref{vbou}). Then
\begin{equation}\label{l4uv}
\| (u(t), v(t))\|_4^4 +  \nu\, \int_0^t \||(u_y(\tau), v_y(\tau))|\, |(u(\tau), v(\tau))| \|_2^2 \,d\tau \leq M_1(t),
\end{equation}
\begin{equation}\label{pineq}
\|p(\cdot,t)\|_2 \le M_2(t), \qquad \int_0^t \|\nabla p(\cdot, \tau)\|^2_2 \,d\tau \le M_3(t),
\end{equation}
where $M_1, M_2$ and $M_3$ are explicit smooth functions of $t\in[0,\infty)$ that depend on $\nu, \kappa$ and the initial norm $\|(u_0, v_0, \theta_0)\|_{H^2}$.
\end{prop}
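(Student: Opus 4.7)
The plan is to establish the three bounds in Proposition \ref{ppr1} simultaneously by coupling an $L^4$ energy estimate for the velocity with a Calder\'on--Zygmund representation of the pressure, using preliminary $L^p$-control on $\theta$ as input.

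First I would record the temperature bounds. Since $\theta$ satisfies a pure transport--vertical-diffusion equation with divergence-free drift, testing against $|\theta|^{p-2}\theta$ yields $\|\theta(t)\|_{L^p}\le\|\theta_0\|_{L^p}$ for every $p\in[2,\infty]$, and the elementary $L^2$-energy estimate for $(u,v)$ driven by the source $\int\theta v$ delivers $\|(u,v)\|_2^2+\nu\int_0^t\|(u_y,v_y)\|_2^2\,d\tau\le C(t)$. Next I would test the $u$-equation against $u(u^2+v^2)$ and the $v$-equation against $v(u^2+v^2)$, sum, and integrate. Incompressibility kills the convective terms, and the vertical dissipation produces on the left-hand side exactly the space-time quantity
$$
\nu\int(u^2+v^2)(u_y^2+v_y^2)\,dxdy+\tfrac{\nu}{2}\|\partial_y(u^2+v^2)\|_{L^2}^2,
$$
matching \eqref{l4uv} up to constants. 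The buoyancy source $\int\theta v(u^2+v^2)$ is controlled by $\|\theta_0\|_\infty\|v\|_2\|(u,v)\|_4^2$ and absorbed via Young's inequality. The pressure contribution, after integration by parts, reduces to $\int p\,u\!\cdot\!\nabla(u^2+v^2)\,dxdy$; I would bound this using the triple-product inequality \eqref{1/2} from Lemma \ref{triple}, distributing the powers so that the horizontal derivatives are converted (via $u_x=-v_y$) to $y$-derivatives that can be absorbed into the above dissipation term.

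For the pressure bounds \eqref{pineq}, I would take the divergence of the momentum equations to obtain the Poisson equation $-\Delta p=\partial_i\partial_j(u_iu_j)-\theta_y$, and split $p=p_1+p_2$ accordingly. Calder\'on--Zygmund gives $\|p_1\|_{L^q}\le C\|(u,v)\|_{L^{2q}}^2$ for $q\in(1,\infty)$; in particular $\|p_1\|_{L^2}\le C\|(u,v)\|_{L^4}^2$, which is exactly the feedback into the $L^4$-evolution. The buoyancy piece $p_2=\partial_y(-\Delta)^{-1}\theta$ is controlled in $L^2$ using the parallel $L^4$ and $L^8$-bounds of $(u,v,\theta)$ together with the vertical dissipation of $\theta$, which promotes $\theta$ to a sufficient Sobolev class. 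Finally, multiplying the Poisson equation by $p$, integrating by parts twice and using $\partial_j(u_iu_j)=u_j\partial_j u_i$ by incompressibility, gives $\|\nabla p\|_{L^2}\le\|u\!\cdot\!\nabla u\|_{L^2}+\|\theta\|_{L^2}$; the first factor is bounded via Lemma \ref{triple} in terms of the $L^4$ quantities already controlled, yielding $\int_0^t\|\nabla p\|_{L^2}^2\,d\tau\le M_3(t)$.

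The principal obstacle is the circular coupling between the $L^4$-evolution and $\|p\|_{L^2}$: the pressure contribution in the energy inequality for $(u,v)$ requires $\|p\|_{L^2}$, while the Calder\'on--Zygmund identity for $\|p\|_{L^2}$ requires the $L^4$-norm of $(u,v)$ (and the buoyancy piece $p_2$ needs the $L^8$-bounds). The resolution is to organize all three inequalities --- the $L^4$-evolution, the pressure identity, and the parallel $L^8$-evolution --- into a single Gronwall loop; the triple-product inequality from Lemma \ref{triple} is the crucial technical tool that permits redistribution of the powers so that the undissipated horizontal derivative $u_x$ appearing in the pressure term can be traded, via $u_x=-v_y$, against the vertically dissipated quantity $(u^2+v^2)(u_y^2+v_y^2)$ generated by the estimate itself.
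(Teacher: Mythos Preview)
Your overall strategy --- an $L^4$ energy estimate for the velocity coupled with a representation of the pressure --- is the same as the paper's, but the execution has a genuine gap in the treatment of $\|\nabla p\|_{L^2}$. From the bound $\|\nabla p\|_{L^2}\le\|\mathbf{u}\cdot\nabla\mathbf{u}\|_{L^2}+\|\theta\|_{L^2}$ you cannot close the loop: the second component $uv_x+vv_y$ of $\mathbf{u}\cdot\nabla\mathbf{u}$ contains $uv_x$, and there is no way to control $\int_0^t\|uv_x\|_{L^2}^2\,d\tau$ with vertical dissipation alone. Lemma~\ref{triple} does not help here, since it bounds trilinear integrals, not the $L^2$-norm of a product; and inside a norm you cannot integrate by parts to move the $x$-derivative off $v$. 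The paper circumvents this by first rewriting the pressure equation, using $u_x=-v_y$, in the special form
\[
-\Delta p \;=\; 2(vu_y)_x + 2(vv_y)_y - \theta_y,
\]
so that \emph{only} products of $v$ with $y$-derivatives appear. This yields immediately $\|\nabla p\|_{L^2}\le C(\|vu_y\|_{L^2}+\|vv_y\|_{L^2}+\|\theta\|_{L^2})$, and the time integral of the right-hand side is exactly what the dissipative term $\nu\int(u^2+v^2)(u_y^2+v_y^2)$ in the $L^4$ estimate supplies. This algebraic rewriting is the key missing ingredient in your outline; once you have it, both \eqref{l4uv} and \eqref{pineq} close by a single Gronwall argument using Lemma~\ref{basicp}.

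A second, smaller issue: you invoke $L^8$-bounds on $(u,v,\theta)$ to handle the buoyancy part $p_2=\partial_y(-\Delta)^{-1}\theta$ of the pressure, and propose coupling an $L^8$-evolution into the Gronwall loop. This is unnecessary for Proposition~\ref{ppr1}; the $L^8$-bound on $v$ belongs to Proposition~\ref{ppr2}, where it is used to obtain $\|p\|_{L^4}$. For Proposition~\ref{ppr1} the paper bounds $\|p\|_{L^2}$ directly from the double-Riesz representation $p=R_iR_j(u_iu_j)+(\text{buoyancy piece})$, requiring only $\|(u,v)\|_{L^4}$ and $\|\theta\|_{L^2}$. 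Your proposed mechanism for $p_2$ --- that vertical dissipation ``promotes $\theta$ to a sufficient Sobolev class'' --- is also misdirected: $\partial_y(-\Delta)^{-1}$ is order $-1$, so the obstruction is low-frequency, not high-frequency, and higher Sobolev regularity of $\theta$ does not address it.
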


The estimates in Proposition \ref{ppr1} have been partially obtained in \cite{ACW11}. To be self-contained, we include these estimates with a simplified proof.

\begin{prop} \label{ppr2}
Let $(u_0, v_0, \theta_0)\in H^2(\mathbb{R}^2)$ and let $(u, v, \theta)$ be the corresponding classical solution of (\ref{vbou}). Then
\begin{equation}\label{v8}
\|v(t)\|_8  \leq M_4(t),
\end{equation}
\begin{equation}\label{uy2}
\|u_y(t)\|_2^2 +  \nu\, \int_0^t \|u_{yy}(\tau)\|_2^2 \,d\tau \le  M_5(t),
\end{equation}
\begin{equation}\label{p4}
\|p(t)\|_4 \le M_6(t),
\end{equation}
where $M_4, M_5$ and $M_6$ are explicit smooth functions of $t\in[0,\infty)$ that depend on $\nu, \kappa$ and the initial norm $\|(u_0, v_0, \theta_0)\|_{H^2}$.
\end{prop}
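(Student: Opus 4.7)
My plan for Proposition \ref{ppr2} is to prove the three estimates in order, with the $L^8$-bound on $v$ as the centerpiece; the other two follow more directly once it is available.

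For \eqref{v8}, I would test the second equation in \eqref{vbou} against $v^7$ and integrate. The convection term vanishes by $u_x + v_y = 0$, and the vertical dissipation produces $\tfrac{7\nu}{16}\|\partial_y v^4\|_2^2$, leaving
\begin{equation*}
\tfrac{1}{8}\tfrac{d}{dt}\|v\|_8^8 + \tfrac{7\nu}{16}\|\partial_y v^4\|_2^2 = 7\iint p\,v^6 v_y\,dx\,dy + \iint\theta\,v^7\,dx\,dy.
\end{equation*}
The $\theta$-integral is handled by H\"older together with the maximum-principle bound $\|\theta(t)\|_\infty \le \|\theta_0\|_\infty$. The pressure integral is delicate because, in the absence of horizontal dissipation, integration by parts in $x$ cannot be used to release the $v_y$. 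The plan is to decompose $p = \bar p + \tilde p$ via Lemma \ref{ha} with a fixed (large) cutoff $R$. On the low-frequency piece, Cauchy--Schwarz after the factorisation $v^6 v_y = v^3\cdot v^3 v_y$ gives $|\iint \bar p\,v^6 v_y| \le \|\bar p\|_\infty \|v\|_6^3\cdot\tfrac14\|\partial_y v^4\|_2$; the bound $\|\bar p\|_\infty \le C\sqrt{\log R}\,\|p\|_{H^1}$ from Lemma \ref{ha} together with Young's inequality absorbs the dissipation factor. On the high-frequency piece, H\"older with exponents $(8,8/3,2)$ gives $|\iint \tilde p\,v^6 v_y| \le \|\tilde p\|_8 \|v\|_8^3 \cdot \tfrac14\|\partial_y v^4\|_2$, and the bound $\|\tilde p\|_8 \le CR^{-1/4}\|p\|_{H^1}$ (again Lemma \ref{ha}) permits the same absorption. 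Interpolating $\|v\|_6$ between the (bounded) $\|v\|_2$ and $\|v\|_8$, one arrives at a differential inequality of the form
\begin{equation*}
\tfrac{d}{dt}\|v\|_8^8 \le C\bigl(\|p\|_{H^1}^2 + 1\bigr)\bigl(1+\|v\|_8^8\bigr)^{7/8},
\end{equation*}
which, since $\int_0^T \|p\|_{H^1}^2\,d\tau < \infty$ by \eqref{pineq}, closes by elementary integration and yields $\|v(t)\|_8 \le M_4(t)$.

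The estimate \eqref{uy2} follows by differentiating the $u$-equation in $y$ and testing against $u_y$. The convective integral $\iint (uu_x+vu_y)_y u_y\,dx\,dy$ reduces after one integration by parts to $\tfrac12\iint(u_x+v_y)u_y^2 = 0$, and the pressure contribution $\iint p_x u_{yy}$ is controlled by Young in the form $\tfrac\nu2\|u_{yy}\|_2^2 + \tfrac{1}{2\nu}\|p_x\|_2^2$; time integration against \eqref{pineq} delivers the claim. For \eqref{p4}, take the divergence of the momentum equations and use $u_x + v_y = 0$ to obtain the Poisson identity $-\Delta p = \partial_i\partial_j(u_i u_j) - \theta_y$, so that modulo a constant
\begin{equation*}
p = -R_iR_j(u_i u_j) + (-\Delta)^{-1}\theta_y.
\end{equation*}
Calder\'on--Zygmund then bounds the first term by $C(\|u\|_8^2 + \|v\|_8^2)$; the companion $L^8$-bound on $u$ is established in parallel by running the same scheme on the $u$-equation (after $u_x = -v_y$, the pressure integral becomes $-\iint p_x u^7\,dx\,dy$ and is treated by an analogous low/high frequency decomposition, with absorption into $\|\partial_y u^4\|_2^2$). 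The residual $(-\Delta)^{-1}\theta_y$ is controlled in $L^4$ using the $L^p$-bounds for $\theta$ furnished by the maximum principle together with standard smoothing estimates for the operator $\partial_y(-\Delta)^{-1}$.

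The main obstacle, by a wide margin, is the pressure term in the $L^8$-estimate of $v$. Without horizontal dissipation, neither a plain H\"older estimate nor the triple-product inequality of Lemma \ref{triple} by itself can tame $\iint p\, v^6 v_y$; only the combination of the low/high frequency decomposition of $p$ from Lemma \ref{ha} with the $H^1$-regularity of $p$ afforded by Proposition \ref{ppr1}, absorbed against the vertical dissipation $\|\partial_y v^4\|_2^2$, makes the argument close.
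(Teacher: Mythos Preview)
Your treatment of \eqref{v8} works, but it is more elaborate than necessary, and your claim that ``neither a plain H\"older estimate \ldots\ can tame $\iint p\,v^6v_y$'' is not correct. The paper simply applies H\"older with exponents $(8,8/3,2)$ to obtain $\int p\,v^6 v_y \le \|p\|_8\,\|v\|_8^3\,\|v^3v_y\|_2$, and then invokes the 2D Gagliardo--Nirenberg interpolation $\|p\|_8\le C\|p\|_2^{1/4}\|p\|_{H^1}^{3/4}$; one application of Young's inequality absorbs $\|v^3v_y\|_2$ into the dissipation and the resulting differential inequality for $\|v\|_8^2$ closes via Proposition~\ref{ppr1} and Gronwall. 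No frequency decomposition of $p$ is needed at this stage. Your estimate for \eqref{uy2} matches the paper's.

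The genuine gap is in your argument for \eqref{p4}. You write $p=-R_iR_j(u_iu_j)+(-\Delta)^{-1}\theta_y$ and bound the first piece by $C(\|u\|_8^2+\|v\|_8^2)$, then assert that $\|u\|_8$ can be obtained ``in parallel by running the same scheme on the $u$-equation.'' This does not work. Testing the $u$-equation against $u^7$ gives the pressure integral $-\int p_x u^7$; whether or not you integrate by parts in $x$ (which, via $u_x=-v_y$, turns it into $-7\int p\,u^6 v_y$), the resulting expression contains no factor of $u_y$ and hence cannot be absorbed into the only available dissipation $\|u^3u_y\|_2^2=\tfrac{1}{16}\|\partial_y u^4\|_2^2$. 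Any H\"older splitting or low/high decomposition of $p$ then forces either a norm $\|u\|_r$ with $r>8$ or a mixed term like $\|u^k v_y\|_2$, neither of which is controlled, so the estimate does not close. The paper avoids $\|u\|_8$ entirely by using the divergence-free condition to rewrite
\[
\Delta p \;=\; -2(vu_y)_x \;-\; (v^2)_{yy} \;+\; \theta_y,
\]
after which Riesz-transform and Hardy--Littlewood--Sobolev bounds give
\[
\|p\|_4 \;\le\; C\bigl(\|vu_y\|_{4/3}+\|v\|_8^2+\|\theta\|_{4/3}\bigr)\;\le\;C\bigl(\|v\|_4\|u_y\|_2+\|v\|_8^2+\|\theta_0\|_{4/3}\bigr),
\]
all of which are already controlled by Proposition~\ref{ppr1}, \eqref{v8} and \eqref{uy2}.
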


\vskip .1in
To prove the propositions, we first recall  the following lemma.
\begin{lemma} \label{basicp}
Let $(u_0, v_0, \theta_0)\in H^2(\mathbb{R}^2)$ and let $(u, v, \theta)$ be the corresponding classical solution of (\ref{vbou}). Then
\begin{equation}\label{l2uv}
\|(u(t), v(t))\|_2^2 + 2 \nu\, \int_0^t \|(u_y(\tau), v_y(\tau))\|_2^2 \,d\tau =\left(\|(u_0, v_0)\|_2 + t\,\|\theta_0\|_2\right)^2
\end{equation}
and, for any $q\ge 2$,
\begin{equation}\label{l2tt}
\|\theta(t)\|_q^q + \kappa \,q\,(q-1)\, \int_0^t \|\theta_y\,|\theta|^{\frac{q-2}{2}}(\tau)\|_2^2 \,d\tau = \|\theta_0\|_q^q.
\end{equation}
In particular, for $2 \le q\le \infty$,
\begin{equation}\label{thq}
\|\theta(t)\|_q\le \|\theta_0\|_q.
\end{equation}
\end{lemma}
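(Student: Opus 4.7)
\medskip

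The plan is to obtain each of the three bounds by a standard energy-type argument, with the only slight subtlety being to convert the pointwise-in-time inequality for $\|(u,v)\|_2$ into the stated quadratic form.

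For \eqref{l2tt} I would start with the $\theta$-equation, multiply by $|\theta|^{q-2}\theta$, and integrate over $\mathbb{R}^2$. The advective term vanishes because
\[
\iint (u\theta_x+v\theta_y)\,|\theta|^{q-2}\theta\,dxdy=\frac{1}{q}\iint (u\partial_x+v\partial_y)|\theta|^q\,dxdy=-\frac{1}{q}\iint (u_x+v_y)|\theta|^q\,dxdy=0
\]
thanks to the incompressibility $u_x+v_y=0$. Integration by parts on the diffusion term gives
\[
\kappa\iint \theta_{yy}\,|\theta|^{q-2}\theta\,dxdy=-\kappa(q-1)\iint |\theta|^{q-2}\theta_y^2\,dxdy=-\kappa(q-1)\,\bigl\|\theta_y\,|\theta|^{\frac{q-2}{2}}\bigr\|_2^2.
\]
Combining and multiplying by $q$ yields the differential identity
\[
\frac{d}{dt}\|\theta\|_q^q+\kappa\,q(q-1)\,\bigl\|\theta_y\,|\theta|^{\frac{q-2}{2}}\bigr\|_2^2=0,
\]
and integrating in time produces \eqref{l2tt}. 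In particular $\|\theta(t)\|_q\le \|\theta_0\|_q$ for every finite $q\ge 2$, and sending $q\to\infty$ (or invoking the maximum principle for the advection–diffusion equation, whose RHS has no zero-order term) delivers the $L^\infty$ bound in \eqref{thq}.

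For \eqref{l2uv} I would multiply the $u$-equation by $u$ and the $v$-equation by $v$, add, and integrate. Again by incompressibility the convective contributions $\iint (uu_x+vu_y)u+\iint (uv_x+vv_y)v$ and the pressure terms $\iint(p_x u+p_y v)=-\iint p(u_x+v_y)$ all vanish, leaving
\[
\frac{1}{2}\frac{d}{dt}\|(u,v)\|_2^2+\nu\,\|(u_y,v_y)\|_2^2=\iint v\,\theta\,dxdy\le\|v\|_2\,\|\theta\|_2\le\|(u,v)\|_2\,\|\theta_0\|_2,
\]
where the last step uses \eqref{thq} with $q=2$. Setting $y(t):=\|(u(t),v(t))\|_2$, dividing by $y(t)$, and discarding the non-negative dissipation gives $y'(t)\le\|\theta_0\|_2$, hence $y(t)\le\|(u_0,v_0)\|_2+t\,\|\theta_0\|_2$. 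Feeding this bound back into the right-hand side of the energy identity and integrating from $0$ to $t$ yields
\[
\|(u,v)\|_2^2(t)+2\nu\int_0^t\|(u_y,v_y)\|_2^2\,d\tau\le\|(u_0,v_0)\|_2^2+2\|\theta_0\|_2\int_0^t y(\tau)\,d\tau,
\]
and evaluating the integral gives exactly the square $(\|(u_0,v_0)\|_2+t\|\theta_0\|_2)^2$ on the right.

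There is no real obstacle here; the only point to be slightly careful about is the bootstrap in the velocity estimate, which requires first extracting a pointwise-in-time a priori bound on $\|(u,v)\|_2$ from the forcing $\iint v\theta$ before one can close the energy inequality in the stated square form. Every other step is a routine manipulation that leans on incompressibility to discard the transport and pressure contributions.
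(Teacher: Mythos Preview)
The paper does not actually prove this lemma; it merely \emph{recalls} it (``we first recall the following lemma'') as a standard energy estimate, presumably from \cite{ACW10,ACW11}. Your argument is exactly the routine computation one would supply: multiply by the appropriate power, kill the transport and pressure terms via incompressibility, integrate by parts on the vertical diffusion, and then, for the velocity, use the bootstrap $y'\le\|\theta_0\|_2$ to close in the stated square form. The calculation $\|(u_0,v_0)\|_2^2+2\|\theta_0\|_2\int_0^t y(\tau)\,d\tau\le(\|(u_0,v_0)\|_2+t\|\theta_0\|_2)^2$ is correct. Note that \eqref{l2uv} as printed carries an equality sign, but your derivation (and any derivation) yields only ``$\le$''; this is a typo in the paper's statement, and the inequality is all that is ever used downstream.
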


\vskip .1in
\begin{proof}[Proof of Proposition \ref{ppr1}]
Taking the inner product of the first equation in (\ref{vbou}) with $u\,(|u|^2+ |v|^{2})$ and
the second equation in (\ref{vbou}) with $v\,(|u|^2+ |v|^{2})$ and integrating by parts, we obtain
\begin{eqnarray*}
&&  \frac{1}{4}\frac{d}{dt} \int (|u|^2+ |v|^{2})^2 \, + \, \nu  \int (|u|^2+ |v|^{2}) (|u_y|^2 + |v_y|^{2})
+ 2 \nu  \int  (u\, u_y + v\, v_y)^{2}   \\
&& \qquad
=-\int p_x \, u\,(|u|^2+ |v|^{2}) +  \,\int p \, v_y \, (|u|^2+ |v|^{2}) +
\,2 \int p \, v \, (u\, u_y+ v\, v_y)\\
&&  \qquad \quad
+  \int \theta \, v \,(|u|^2+ |v|^{2}).
\end{eqnarray*}
The terms on the right can be estimated as follows. By H\"{o}der's inequality,
\begin{eqnarray*}
\left| \int  \theta \, v \,(|u|^2+ |v|^{2}) \right| \le C \|\theta\|_{4}\,\||u|^2+ |v|^{2} \|_{2}^{3/2}.\label{eas-4}
\end{eqnarray*}
By Lemma \ref{triple},
\begin{eqnarray*}
&& \hskip -.3in
\left|  \int p_x \, u \,\,(|u|^2+ |v|^{2})  \right|\le C \|p_x\|_{2}\, \|u\|_2^{1/2} \|u_x\|_2^{1/2}
\left\||u|^2+ |v|^{2} \right\|_{2}^{1/2} \left\|u u_y+ vv_y \right\|_{2}^{1/2}, \label{pterm-4}  \\
&&  \hskip -.3in
\left|  \int p \, v_y \, (|u|^2+ |v|^{2}) \right| \le C \|p\|_2^{1/2} \|p_x\|_2^{1/2} \,
\|v_y\|_2 \,\||u|^2+ |v|^{2} \|_{2}^{1/2}\left\|u u_y+ vv_y \right\|_{2}^{1/2},  \\
&&  \hskip -.3in
2 \left| \int p \, v \, (u\, u_y+ v\, v_y) \right| \le C \|p\|_2^{1/2} \|p_x\|_2^{1/2} \, \|v\|_2^{1/2} \|v_y\|_2^{1/2}
\left\|u u_y+ vv_y \right\|_{2}. \label{pterm-44}
\end{eqnarray*}
To further the estimate for $p$, we take the divergence of the first two equations in (\ref{vbou}) to get
\begin{eqnarray*}
\Delta p &=& -(uu_x +vu_y)_x - (uv_x +vv_y)_y + \theta_y \\
&=& -2 (v u_y)_x  - 2 (v\,v_y)_y + \theta_y.
\end{eqnarray*}
By standard bounds for the Riesz transforms, we have
\begin{eqnarray}
 \|p\|_2 & \le & C \left( \left\||u|^2+ |v|^{2} \right\|_{2} + \|\theta\|_{2} \right), \label{nap-4-1}  \\
 \|\nabla p\|_{2} & \le & C \left( \left\|v\,u_y \right\|_{2}
+ \left\|v\,v_y \right\|_{2}  + \|\theta\|_{2} \right). \label{nap-4-2}
\end{eqnarray}
Thus, we obtain
\begin{eqnarray*} \label{ebase-44}
&&  \frac{1}{4}\frac{d}{dt} \int (|u|^2+ |v|^{2})^2 \, + \, \nu  \int (|u|^2+ |v|^{2}) (|u_y|^2 + |v_y|^{2})
+ 2 \nu  \int  (u\, u_y + v\, v_y)^{2}   \\
&& \qquad
\leq C  \left( \left\|v\,u_y \right\|_{2}
+ \left\|v\,v_y \right\|_{2}  + \|\theta\|_{2} \right) \; \|u\|_{2}^{1/2} \|v_y\|_2^{1/2} \;  \|u^2+ v^2 \|_{2}^{1/2}
\left\|u u_y+ vv_y \right\|_{2}^{1/2} \\
&& \qquad \quad
+ \,C \left(\|u^2+ v^2 \|_{2} + \|\theta\|_{2} \right)^{1/2}   \left( \left\|v\,u_y \right\|_{2}
+ \left\|v\,v_y \right\|_{2}  + \|\theta\|_{2} \right)^{1/2} \,
\|v_y\|_2 \\
&& \qquad \qquad \times \||u|^2+ |v|^{2} \|_{2}^{1/2}\left\|u u_y+ vv_y \right\|_{2}^{1/2}\\
&& \qquad \quad
+ \,C \left(\|u^2+ v^2 \|_{2} + \|\theta\|_{2} \right)^{1/2}   \left( \left\|v\,u_y
\right\|_{2}
+ \left\|v\,v_y \right\|_{2}  + \|\theta\|_{2} \right)^{1/2}  \, \|v\|_2^{1/2} \\
&& \qquad \qquad \times
\|v_y\|_2^{1/2} \left\|u u_y+ vv_y \right\|_{2} +\, C \|\theta\|_{4}\,\||u|^2+ |v|^{2} \|_{2}^{3/2}.
\end{eqnarray*}
By Young's inequality,
\begin{eqnarray*} \label{ebase-4-1}
&&  \frac{d}{dt} \int (|u|^2+ |v|^{2})^2 \, + \, \nu  \int (|u|^2+ |v|^{2}) (|u_y|^2 + |v_y|^{2}) +  \nu  \int  (u\, u_y + v\, v_y)^{2}   \\
&&  \qquad
\leq C(1+\|u\|_2^2+\|v\|_2^2)(1+\; \|u\|_{H^1}^2) \;  \|(u^2+ v)^{2} \|_{2}^2 + C \|\theta\|_{2}^2 \; \|u\|_{H^1}^2\\
&&   \qquad \quad
+ \,C \|\theta\|_{4}^4 +C \|\theta\|_{2}^4.
\end{eqnarray*}
By Gronwall inequality and applying Lemma \ref{basicp}, we reach
\begin{eqnarray*}
&& \int (|u|^2+ |v|^{2})^2 \, + \, \nu  \int_0^t \int (|u|^2+ |v|^{2}) (|u_y|^2 + |v_y|^{2})
+  \nu  \int_0^t \int  (u\, u_y + v\, v_y)^{2}   \\
&& \qquad
\leq e^{C\,\left(t+ \int_0^t \|u\|_{H^1}^2 d\tau\right)} \; \left( \||u_0|^2+ |v_0|^{2} \|_{2}^2 + C \|\theta_0\|_{2}^2 \; \int_0^t \|u\|_{H^1}^2\,d\tau
+ C \|\theta_0\|_{4}^4 \; t \right).
\end{eqnarray*}
Using the fact that $u_x =-v_y$ and Lemma \ref{basicp}, we see that $\|u\|_{H^1}^2$ is time integrable and the inequality above verifies \eqref{l4uv}.
In particular, by \eqref{nap-4-1} and \eqref{nap-4-2}, we obtain \eqref{pineq}. This completes thee proof of Proposition \ref{ppr1}.
\end{proof}

\vskip .1in
\begin{proof}[Proof of Proposition \ref{ppr2}]
Taking the inner product of the second equation in (\ref{vbou}) with $v\,|v|^6$ and integrating by parts, we obtain
\begin{eqnarray*}
&&  \frac{1}{8}\frac{d}{dt}\int |v|^8 \, + 7 \nu  \int |v|^{6}\, |v_y|^{2} =   7\,\int p \, v_y \, |v|^{6} +  \int \theta v |v|^{6}.
\end{eqnarray*}
The terms on the right can be bound by
\begin{eqnarray*}
 7 \int p \, |v_y| \, |v|^{6} +  \int \theta |v|^{7} &\leq&  C \|p\|_8 \|v\|_{8}^{3} \|v^3 v_y\|_2 +\|\theta\|_8  \|v\|_8^7   \\
&\leq& C \|p\|^{1/4}\|p\|_{H^1}^{3/4} \|v\|_{8}^{3} \|v^3 v_y\|_2 +\|\theta\|_8  \|v\|_8^7.
\end{eqnarray*}
By Young's inequality, we get
$$
 \frac{d}{dt} \|v\|_8^8 \,  + \, \nu  \int |v|^{6}\, |v_y|^{2}
\leq C    \|p\|^{1/2}\|p\|_{H^1}^{3/2} \|v\|_{8}^{6} +\|\theta\|_8  \|v\|_8^7.
$$
Thus,
\begin{eqnarray*}
&&  \frac{d}{dt} \|v\|_8^2 \,
\leq     C \|p\|^{1/2}\|p\|_{H^1}^{3/2}  +\|\theta\|_8^2+   \|v\|_8^2.
\end{eqnarray*}
Applying Gronwall inequality then yields \eqref{v8}. We now prove (\ref{uy2}). Taking the inner product of the first equation in (\ref{vbou}) with $-u_{yy}$  and integrating by parts, we obtain
$$
 \frac{1}{2}\frac{d}{dt} \|u_y\|_2^2 \, + \, \nu  \|u_{yy}\|_2^2
=-\int p_x \, u_{yy}$$
Applying H\"{o}lder's and Young's inequalities, we get
\begin{eqnarray*}
&&  \frac{d}{dt} \|u_y\|_2^2 \, + \, \nu  \|u_{yy}\|_2^2
  \le 2 \|p_x\|_2^2.
\end{eqnarray*}
Therefore,
\begin{eqnarray} \label{ebase-4-2}
&& \|u_y\|_2^2 \, + \, \nu  \int_0^t\|u_{yy}\|_2^2 d\tau \le C \int_0^t \|p\|_{H^1}^2 d\tau +  \|u_{0y}\|_2^2.
\end{eqnarray}
To prove (\ref{p4}), we recall that
\begin{eqnarray*}
\Delta p &=& -2 (v u_y)_x  -  (v^2)_{yy} + \theta_y.
\end{eqnarray*}
By the Hardy-Littlewood-Sobolev inequality and standard bounds for the Riesz transforms, we have
\begin{eqnarray*}
\|p(t)\|_4  &\le&  C \left( \|(-\Delta)^{-1} (v u_y)_x\|_4 + \|(-\Delta)^{-1} (v^2)_{yy}\|_4 + \|(-\Delta)^{-1} \theta_y \|_4\right)  \\
&\le& C \left( \|v u_y \|_{4/3} +  \|v\|_8^2  + \|\theta\|_{4/3} \right)  \\
&\le& C \left( \| v\|_4\, \| u_y \|_{2} +  \|v\|_8^2  + \|\theta_0\|_{4/3} \right).
\end{eqnarray*}
Therefore, (\ref{p4}) holds. This completes the proof of Proposition \ref{ppr2}.
\end{proof}

\vskip .3in
\section{Proof of Theorem \ref{major}}
\label{majorproof}

This section is devoted to proving Theorem \ref{major}. A major component of the proof is the global bound (\ref{qlogq}). A precise statement of this inequality is contained in the following proposition.
\begin{prop} \label{logeq}
Let $(u_0, v_0, \theta_0)\in H^2(\mathbb{R}^2)$ and let $(u, v, \theta)$ be the corresponding classical solution of (\ref{vbou}). Then,
\begin{equation}
\sup_{r\ge 2} \frac{\|v(t) \|_{L^{2r}}}{\sqrt{r \log r}} \le \sup_{r\ge 2} \frac{\|v_0 \|_{L^{2r}}}{\sqrt{r \log r}} + B(t),
\end{equation}
where $B(t)$ is an explicit integrable function of $t\in [0,\infty)$ that depends on $\nu, \kappa$ and the initial norm $\|(u_0, v_0, \theta_0)\|_{H^2}$.
\end{prop}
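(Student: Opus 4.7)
\textbf{Proof plan for Proposition~\ref{logeq}.} My plan is to run an $L^{2r}$ energy estimate on the $v$-equation, treat the pressure via the low/high frequency decomposition of Lemma~\ref{ha}, and choose the Fourier cut-off $R$ so that the target $\sqrt{r\log r}$ growth is produced naturally on the right.

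I would begin by multiplying the second equation of (\ref{vbou}) by $v|v|^{2r-2}$ and integrating over $\mathbb{R}^{2}$. The divergence-free condition kills the convective term, and an integration by parts in the dissipation yields
\begin{equation*}
\frac{1}{2r}\frac{d}{dt}\|v\|_{2r}^{2r} + (2r-1)\nu\,\bigl\||v|^{r-1}v_{y}\bigr\|_{2}^{2} = (2r-1)\!\iint\! p\,v_{y}|v|^{2r-2} + \iint\! \theta\,v|v|^{2r-2}.
\end{equation*}
The buoyancy integral is dominated by $\|\theta_{0}\|_{\infty}\|v\|_{2r}^{2r-1}$ via H\"older and the maximum principle of Lemma~\ref{basicp}. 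For the pressure, I split $p=\bar p+\widetilde p$ at radius $R$, factor $|v|^{2r-2}=|v|^{r-1}\cdot|v|^{r-1}$, and apply Cauchy--Schwarz followed by Young's inequality to absorb $\tfrac12(2r-1)\nu\,\||v|^{r-1}v_{y}\|_{2}^{2}$ into the left-hand side, reducing the pressure contribution to
\begin{equation*}
\frac{C(2r-1)}{\nu}\Bigl(\|\bar p\|_{\infty}^{2}\,\|v\|_{2r-2}^{2r-2}+\|\widetilde p\|_{2r}^{2}\,\|v\|_{2r}^{2r-2}\Bigr).
\end{equation*}

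The decisive step is the choice $R=r^{r}$: Lemma~\ref{ha} then delivers the low-frequency bound $\|\bar p\|_{\infty}\le C\sqrt{r\log r}\,\|p\|_{H^{1}}$ alongside the uniform high-frequency bound $\|\widetilde p\|_{2r}\le C\cdot 2r\cdot R^{-1/r}\|p\|_{H^{1}} = 2C\|p\|_{H^{1}}$. Combined with the global control $\int_{0}^{t}\|p(\tau)\|_{H^{1}}^{2}\,d\tau<\infty$ furnished by (\ref{pineq}) and (\ref{uy2}) of Propositions~\ref{ppr1}--\ref{ppr2}, the inequality takes the schematic form
\begin{equation*}
\frac{d}{dt}\|v\|_{2r}^{2r} \le \frac{C'(2r-1)}{\nu}\|p\|_{H^{1}}^{2}\Bigl(r\log r\cdot\|v\|_{2r-2}^{2r-2}+\|v\|_{2r}^{2r-2}\Bigr)+2r\|\theta_{0}\|_{\infty}\|v\|_{2r}^{2r-1}.
\end{equation*}
Introducing $w_{r}(t):=\|v(t)\|_{2r}/\sqrt{r\log r}$ and $Z(t):=\sup_{\rho\ge 2}w_{\rho}(t)$, applying the trivial bound $\|v\|_{2r-2}^{2r-2}\le(r\log r)^{r-1}Z(\tau)^{2r-2}$ (with Lemma~\ref{basicp} handling the base case $r=2$), dividing by $(r\log r)^{r}$, integrating in time, taking the $2r$-th root, and finally taking $\sup_{r\ge 2}$ produces an inequality of the form $Z(t)\le Z(0)+B(t)$, where $B(t)$ is expressed as an explicit integral in $\|p\|_{H^{1}}^{2}$, $\|v\|_{2}$, and $\|\theta_{0}\|_{\infty}$ that is uniform in $r$.

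The principal obstacle is the bookkeeping that forces both frequency pieces of the pressure to fit into the single Lyapunov quantity $Z(t)$ with an $r$-independent time coefficient. The scaling $R=r^{r}$ is the unique choice that simultaneously generates precisely $\sqrt{r\log r}$ in $\|\bar p\|_{\infty}$ and preserves uniform-in-$r$ control of $\|\widetilde p\|_{2r}$: any smaller $R$ leaves a polynomial loss in $r$ from the high-frequency piece, while any larger $R$ overshoots the target growth in the low-frequency piece. A secondary subtlety is that, after the $2r$-th root is taken, the factor $Z(\tau)^{(2r-2)/(2r)}\to Z(\tau)$ as $r\to\infty$; closing the estimate therefore requires either a Gronwall iteration or a careful limit argument resting on the membership $\|p\|_{H^{1}}^{2}\in L^{1}_{\mathrm{loc}}$.
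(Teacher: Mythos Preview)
Your strategy — energy estimate on $v$, frequency splitting of $p$, and the bound $\|\bar p\|_\infty\le C\sqrt{\log R}\,\|p\|_{H^1}$ — matches the paper's. But the scaling you propose does \emph{not} close, and the defect is structural, not cosmetic.

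Trace the powers of $r$. After Young's inequality your low-frequency term in the $\frac{1}{2r}\frac{d}{dt}\|v\|_{2r}^{2r}$ balance is
\[
\frac{C(2r-1)}{\nu}\,\|\bar p\|_\infty^{2}\,\|v\|_{2r-2}^{2r-2}
\;\le\;\frac{C(2r-1)}{\nu}\,(\log R)\,\|p\|_{H^1}^{2}\,\|v\|_{2r-2}^{2r-2}.
\]
Interpolating $\|v\|_{2r-2}^{2r-2}\le \|v\|_2^{2/(r-1)}\|v\|_{2r}^{2r-2-2/(r-1)}$ and passing to $\frac{d}{dt}\|v\|_{2r}^2$ (divide by $\tfrac12\|v\|_{2r}^{2r-2}$) gives a contribution of order $(2r-1)\log R$. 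With your choice $R=r^r$ you have $\log R=r\log r$, so this term is $\sim r^2\log r$, and after dividing by $r\log r$ you are left with a factor $r$ that diverges. Your displayed ``schematic'' inequality hides this by dropping the factor $2r$ on the pressure term (compare the $2r$ you correctly kept on the $\theta$ term); once that factor is restored, the same extra $r$ appears in your $Z$-formulation. The statement that $R=r^r$ is the ``unique choice'' is therefore false: it balances only the high-frequency piece, and overshoots the low-frequency piece by exactly a factor $r$.

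The paper resolves this tension by treating the high-frequency integral \emph{not} with plain H\"older/Cauchy--Schwarz but with the anisotropic triple-product inequality of Lemma~\ref{triple}, namely $\int \tilde p\,v_y|v|^{2r-2}\le C\,\|\tilde p\|_4^{2/3}\|\tilde p_x\|_2^{1/3}\|v^{r-1}\|_2^{2/3}\|(v^{r-1})_y\|_2^{1/3}\|v_y v^{r-1}\|_2$. This brings in $\|\tilde p\|_4$ (controlled by $CR^{-1/2}\|p\|_{H^1}$ and by $\|p\|_4$ from Proposition~\ref{ppr2}) rather than $\|\tilde p\|_{2r}$. Because the gain from $R$ is now $R^{-1/2}$ instead of $R^{-1/r}$, a merely \emph{polynomial} cutoff $R\sim r^3$ suffices to kill the high-frequency piece, and then $\log R\sim\log r$ makes the low-frequency term exactly $O(r\log r)$ as required. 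The use of Lemma~\ref{triple} here is the missing idea; without it, no choice of $R$ balances both pieces. Your $Z$-closure has a separate difficulty (the ratio $(Z/w_r)^{2r-2}$ is uncontrolled), but this is secondary and is in any case bypassed once you interpolate $\|v\|_{2r-2}$ between $\|v\|_2$ and $\|v\|_{2r}$ as the paper does, keeping a closed inequality in $\|v\|_{2r}$ alone.
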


\vskip .1in
With this global bound at our disposal, we are ready to prove Theorem \ref{major}.
\begin{proof}[Proof of Theorem \ref{major}]
It follows from Proposition 5.3 of \cite{ACW11} that the quantity
$$
Y(t) = \|\omega\|_{H^1}^2 + \|\theta\|_{H^2}^2 + \|\omega^2+|\nabla \theta|^2\|_2^2
$$
satisfies
\begin{eqnarray*}
&&  \frac{d}{dt} Y(t) + \|\omega_y\|_{H^1}^2 + \|\theta_y\|_{H^2}^2 + \int (\omega^2+|\nabla \theta|^2)\,\left(\omega_y^2+|\nabla \theta_y|^2\right)
+ \int (\omega \omega_y+\nabla \theta \cdot \nabla \theta_y)^2\\
&& \qquad \le C\left(1+\|\theta_0\|_\infty^2+ \|v\|_\infty^2 +\|u_y\|_2^2 + (1+\|u\|_2^2)\|v_y\|_2^2\right)\,Y(t).
\end{eqnarray*}
By Proposition \ref{login},
$$
\|v\|_{\infty} \le C \, \sup_{r\ge 2} \frac{\|v\|_{r}}{\sqrt{r \log r }} \,
\left(\log (e + \|v\|_{H^2})\log \log (e + \|v\|_{H^2})\right)^\frac12.
$$
Applying Proposition \ref{logeq} and using the simple fact that $\|v\|^2_{H^2} \le \|\omega\|^2_{H^1} \le Y(t)$, we obtain
$$
 \frac{d}{dt} Y(t)  \le A(t) Y(t) + C\, B^2(t)\, Y(t)\, \log (e + Y(t))\,\log\log (e + Y(t)),
$$
where $A(t)=C\, \left(1+\|\theta_0\|_\infty^2+ \|u_y\|_2^2 + (1+\|u\|_2^2)\|v_y\|_2^2\right)$. An application of Gronwall's inequality then concludes the proof of Theorem \ref{major}.
\end{proof}

\vskip .1in
Finally we prove Proposition \ref{logeq}.
\begin{proof}[Proof of Proposition \ref{logeq}]
 Taking the inner product of the second equation in (\ref{vbou}) with $v\,|v|^{2r-2}$ and integrating by parts, we obtain
\begin{eqnarray}
&& \label{ebase}
\frac{1}{2r}\frac{d}{dt} \int |v|^{2r} \, + \, \nu (2r-1) \int v_y^2 \,|v|^{2r-2}  \nonumber \\
&& \qquad
= (2r-1)\,\int p \, v_y \,\,|v|^{2r-2} +  \int \theta \, v \,|v|^{2r-2} \nonumber \\
&& \qquad
= (2r-1)\,\int \overline{p} \, v_y \,\,|v|^{2r-2} + (2r-1)\,\int \tilde{p} \, v_y \,\,|v|^{2r-2} +  \int \theta \, v \,|v|^{2r-2}. \label{root}
\end{eqnarray}
By H\"{o}lder's inequality,
\begin{eqnarray}
&&  \int \theta \, v \,|v|^{2r-2} \le \|\theta\|_{2r}\,\|v\|_{2r}^{2r-1}, \label{eas}\\
&& \int \overline{p}  \, v_y \,\,|v|^{2r-2}  \le \|\overline{p}\|_{\infty}\,  \|v^{r-1}\|_{2} \, \|v_y v^{r-1}\|_2. \nonumber
\end{eqnarray}
Applying Lemma \ref{triple}, we have
$$
\int \tilde{p} \, v_y \,\,|v|^{2r-2} \le C \|\tilde{p}\|_4^{\frac23}\, \|\tilde{p}_x\|_2^{\frac13}\, \|v^{r-1}\|_2^{\frac23}\, \|(r-1) v_y v^{r-2}\|_2^{\frac13} \, \|v_y v^{r-1}\|_2.
$$
Furthermore, by H\"{o}lder's inequality,
\begin{eqnarray*}
&&
\left\||v|^{r-1} \right\|_2  =  \left\|v \right\|_{2(r-1)}^{r-1}
\leq   \|v\|_2^{\frac{1}{r-1}}
\left\|v \right\|_{2r}^{\frac{r(r-2)}{r-1}},  \\
&&
\left\||v|^{r-2}v_y \right\|_2^2 =  \int  |v|^{2(r-2)}v_y^2   = \int  |v|^{2(r-2)}v_y^{\frac{2(r-2)}{r-1}}
v_y^{\frac{2}{r-1}}   \le
 \|v_y\|_2^{\frac{2}{r-1}} \left\|v_y \,|v|^{r-1}\right\|_2^{\frac{2(r-2)}{r-1}}.
\end{eqnarray*}
Therefore,
\begin{eqnarray*}
\int \overline{p}  \, v_y \,\,|v|^{2r-2}  &\le&  C \,\|\overline{p}\|_{\infty}\, \|v\|_2^{\frac{1}{r-1}} \left\|v \right\|_{2r}^{\frac{r(r-2)}{r-1}}\,\|v_y v^{r-1}\|_2, \\
\int \tilde{p} \, v_y \,\,|v|^{2r-2} &\le& C\, (r-1)^{\frac13} \,\|\tilde{p}\|_4^{\frac23}\, \|\tilde{p}_x\|_2^{\frac13}\,\|v\|_{2}^{\frac{2}{3(r-1)}} \left\|v \right\|_{2r}^{\frac{2r(r-2)}{3(r-1)}} \nonumber\\
&&\quad \times \|v_y\|_2^{\frac{1}{3(r-1)}} \left\|v_y \,|v|^{r-1}\right\|_2^{1+ \frac{(r-2)}{3(r-1)}}.
\end{eqnarray*}
By Young's inequality and Lemma \ref{ha},
\begin{eqnarray}
(2r-1) \int \overline{p}  \, v_y \,\,|v|^{2r-2} &\le& \frac{\nu}{4} (2r-1) \|v_y v^{r-1}\|_2^2 \nonumber\\
&& + \, C (2r-1) (\log R)\, \|p\|^2_{H^1} \|v\|_2^{\frac{2}{r-1}} \,\|v\|_{2r}^{2r-2-\frac{2}{r-1}}. \label{pbe}
\end{eqnarray}
By Young's inequality and Lemmas \ref{ha} and \ref{lpt},
\begin{eqnarray}
(2r-1)\int \tilde{p} \, v_y \,\,|v|^{2r-2} &\le& \frac{\nu}{4} (2r-1) \|v_y v^{r-1}\|_2^2 + C\, (2r-1)(r-1)^{\frac{2r-2}{2r-1}}  \nonumber\\ && \times \|\tilde{p}\|_4^{\frac{4(r-1)}{2r-1}} \|\tilde{p}_x\|_2^{\frac{2(r-1)}{2r-1}} \|v_y\|_2^{\frac{2}{2r-1}}
 \|v\|_{2}^{\frac{4}{2r-1}} \left\|v \right\|_{2r}^{2r-2-\frac{2(r+1)}{2r-1}}
\nonumber\\
&\le& \frac{\nu}{4} (2r-1) \|v_y v^{r-1}\|_2^2 + C\, (2r-1)(r-1)^{\frac{2r-2}{2r-1}} R^{-\frac{r-1}{2r-1}} \nonumber\\
&& \times \|p\|_{L^4}^{\frac{2r-2}{2r-1}} \|p\|_{H^1}^{\frac{4r-4}{2r-1}} \|v_y\|_2^{\frac{2}{2r-1}} \|v\|_{2}^{\frac{4}{2r-1}} \left\|v \right\|_{2r}^{2r-3-\frac{3}{2r-1}}.  \label{pte}
\end{eqnarray}
Without loss of generality, we assume $\|v\|_{2r} \ge 1$. Inserting (\ref{eas}),(\ref{pbe}) and (\ref{pte}) in (\ref{root}), we have
\begin{eqnarray*}
&&
\frac1{2r} \frac{d}{dt} \|v\|_{L^{2r}}^{2r} + \frac\nu{2} (2r-1) \int v_y^2 |v|^{2r-2} \,dx \\
&& \qquad \le C (2r-1) (\log R)\, \|p\|^2_{H^1} \|v\|_2^{\frac{2}{r-1}} \,\|v\|_{2r}^{2r-2}\\
&& \qquad
+C\, (2r-1)(r-1)^{\frac{2r-2}{2r-1}} R^{-\frac{r-1}{2r-1}} \|p\|_{L^4}^{\frac{2r-2}{2r-1}} \|p\|_{H^1}^{\frac{4r-4}{2r-1}} \|v_y\|_2^{\frac{2}{2r-1}} \|v\|_{2}^{\frac{4}{2r-1}} \left\|v \right\|_{2r}^{2r-2}\\
&&\qquad + \|\theta\|_{L^{2r}} \, \|v\|_{L^{2r}}^{2r-1}.
\end{eqnarray*}
Especially,
\begin{eqnarray*}
\frac{d}{dt} \|v\|_{L^{2r}}^2 &\le&  C (2r-1) (\log R)\, \|p\|^2_{H^1} \|v\|_2^{\frac{2}{r-1}}
\\
&& + \,C\, (2r-1)(r-1)^{\frac{2r-2}{2r-1}} R^{-\frac{r-1}{2r-1}} \|p\|_{L^4}^{\frac{2r-2}{2r-1}} \left(\|p\|_{H^1}^2 + \|v_y\|^2_2\right) \|v\|_{2}^{\frac{4}{2r-1}}\\
&&  + \, \|\theta\|_{L^{2r}}^2 + \|v\|_{L^{2r}}^2.
\end{eqnarray*}
Taking $R= (2r-1)^\frac{2r-1}{2r-2} (r-1)^2$, integrating in time and applying Propositions \ref{ppr1} and \ref{ppr2}, we obtain
$$
\|v(t) \|_{L^{2r}}^2 \le \|v_0 \|_{L^{2r}}^2 + B_1(t) r \log r + B_2(t),
$$
where $B_1$ and $B_2$ are explicit integrable functions. Therefore,
$$
\sup_{r\ge 2} \frac{\|v(t) \|_{L^{2r}}^2}{r \log r} \le \sup_{r\ge 2} \frac{\|v_0 \|_{L^{2r}}^2}{r \log r} + (B_1(t) + B_2(t)).
$$
This completes the proof of Proposition \ref{logeq}.
\end{proof}

\vskip .4in
\appendix

\section{Besov and Triebel-Lizorkin spaces}
\label{Besov}

This appendix provides the definitions of Besov and Triebel-Lizorkin spaces and some related facts used in the previous sections. Materials presented in this appendix can be found in several books and papers (see, e.g., \cite{BL,Che,RS,Tri}).

\vskip .1in
\vspace{.08in} We start with several notation. $\mathcal{S}$ denotes
the usual Schwarz class and ${\mathcal S}'$ its dual, the space of
tempered distributions. ${\mathcal S}_0$ denotes a subspace of ${\mathcal
S}$ defined by
$$
{\mathcal S}_0 = \left\{ \phi\in {\mathcal S}: \,\, \int_{\mathbb{R}^d}
\phi(x)\, x^\gamma \,dx =0, \,|\gamma| =0,1,2,\cdots \right\}
$$
and ${\mathcal S}_0'$ denotes its dual. ${\mathcal S}_0'$ can be identified
as
$$
{\mathcal S}_0' = {\mathcal S}' / {\mathcal S}_0^\perp = {\mathcal S}' /{\mathcal P}
$$
where ${\mathcal P}$ denotes the space of multinomials.

\vspace{.1in} To introduce the Littlewood-Paley decomposition, we
write for each $j\in \mathbb{Z}$
\begin{equation}\label{aj}
A_j =\left\{ \xi \in {\Bbb R}^d: \,\, 2^{j-1} \le |\xi| <
2^{j+1}\right\}.
\end{equation}
The Littlewood-Paley decomposition asserts the existence of a
sequence of functions $\{\Phi_j\}_{j\in {\Bbb Z}}\in {\mathcal S}$ such
that
$$
\mbox{supp} \widehat{\Phi}_j \subset A_j, \qquad
\widehat{\Phi}_j(\xi) = \widehat{\Phi}_0(2^{-j} \xi)
\quad\mbox{or}\quad \Phi_j (x) =2^{jd} \Phi_0(2^j x),
$$
and
$$
\sum_{j=-\infty}^\infty \widehat{\Phi}_j(\xi) = \left\{
\begin{array}{ll}
1&,\quad \mbox{if}\,\,\xi\in {\Bbb R}^d\setminus \{0\},\\
0&,\quad \mbox{if}\,\,\xi=0.
\end{array}
\right.
$$
Therefore, for a general function $\psi\in {\mathcal S}$, we have
$$
\sum_{j=-\infty}^\infty \widehat{\Phi}_j(\xi)
\widehat{\psi}(\xi)=\widehat{\psi}(\xi) \quad\mbox{for $\xi\in {\Bbb
R}^d\setminus \{0\}$}.
$$
In addition, if $\psi\in {\mathcal S}_0$, then
$$
\sum_{j=-\infty}^\infty \widehat{\Phi}_j(\xi)
\widehat{\psi}(\xi)=\widehat{\psi}(\xi) \quad\mbox{for any $\xi\in
{\Bbb R}^d $}.
$$
That is, for $\psi\in {\mathcal S}_0$,
$$
\sum_{j=-\infty}^\infty \Phi_j \ast \psi = \psi
$$
and hence
$$
\sum_{j=-\infty}^\infty \Phi_j \ast f = f, \qquad f\in {\mathcal S}_0'
$$
in the sense of weak-$\ast$ topology of ${\mathcal S}_0'$. For
notational convenience, we define
\begin{equation}\label{del1}
\Delta_j f = \Phi_j \ast f, \qquad j \in {\Bbb Z}.
\end{equation}
\begin{define}
For $s\in {\Bbb R}$ and $1\le p,q\le \infty$, the homogeneous Besov
space $\mathring{B}^s_{p,q}$ consists of $f\in {\mathcal S}_0' $
satisfying
$$
\|f\|_{\mathring{B}^s_{p,q}} \equiv \|2^{js} \|\Delta_j
f\|_{L^p}\|_{l^q} <\infty.
$$
\end{define}
\vspace{.1in} We now choose $\Psi\in {\mathcal S}$ such that
$$
\widehat{\Psi} (\xi) = 1 - \sum_{j=0}^\infty \widehat{\Phi}_j (\xi),
\quad \xi \in {\Bbb R}^d.
$$
Then, for any $\psi\in {\mathcal S}$,
$$
\Psi \ast \psi + \sum_{j=0}^\infty \Phi_j \ast \psi =\psi
$$
and hence
\begin{equation}\label{sf}
\Psi \ast f + \sum_{j=0}^\infty \Phi_j \ast f =f
\end{equation}
in ${\mathcal S}'$ for any $f\in {\mathcal S}'$. To define the inhomogeneous Besov space, we set
\begin{equation} \label{del2}
\Delta'_j f = \left\{
\begin{array}{ll}
0,&\quad \mbox{if}\,\,j\le -2, \\
\Psi\ast f,&\quad \mbox{if}\,\,j=-1, \\
\Phi_j \ast f, &\quad \mbox{if} \,\,j=0,1,2,\cdots.
\end{array}
\right.
\end{equation}
\begin{define}
The inhomogeneous Besov space $B^s_{p,q}$ with $1\le p,q \le \infty$
and $s\in {\Bbb R}$ consists of functions $f\in {\mathcal S}'$
satisfying
$$
\|f\|_{B^s_{p,q}} \equiv \|2^{js} \|\Delta'_j f\|_{L^p} \|_{l^q}
<\infty.
$$
\end{define}

\vskip .1in
Many frequently used function spaces are special cases of  Besov spaces. The following proposition lists some useful equivalence and embedding relations.
\begin{prop}
For any $s\in \mathbb{R}$,
$$
\mathring{H}^s \sim \mathring{B}^s_{2,2}, \quad H^s \sim B^s_{2,2}.
$$
For any $s\in \mathbb{R}$ and $1<q<\infty$,
$$
\mathring{B}^{s}_{q,\min\{q,2\}} \hookrightarrow \mathring{W}_{q}^s \hookrightarrow \mathring{B}^{s}_{q,\max\{q,2\}}.
$$
In particular, $\mathring{B}^{0}_{q,\min\{q,2\}} \hookrightarrow L^q \hookrightarrow \mathring{B}^{0}_{q,\max\{q,2\}}$.
\end{prop}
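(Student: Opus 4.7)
The plan is to split the proposition into two independent pieces and treat each by a standard Littlewood--Paley argument. For the identifications $\mathring H^s\sim\mathring B^s_{2,2}$ and $H^s\sim B^s_{2,2}$, I would argue directly via Plancherel. Since $\sum_j\widehat\Phi_j\equiv 1$ on $\mathbb{R}^d\setminus\{0\}$, $\mathrm{supp}\,\widehat\Phi_j\subset A_j$, and each $A_j$ overlaps only with $A_{j\pm 1}$, the finite-overlap sum $\sum_j|\widehat\Phi_j|^2$ is bounded above and below by positive constants away from the origin. Together with $|\xi|\sim 2^j$ on $A_j$, Plancherel gives
$$
\sum_j 2^{2js}\|\Delta_j f\|_{L^2}^2 \;=\; \sum_j 2^{2js}\int|\widehat\Phi_j(\xi)|^2|\widehat f(\xi)|^2\,d\xi \;\sim\; \int|\xi|^{2s}|\widehat f(\xi)|^2\,d\xi \;=\; \|f\|_{\mathring H^s}^2.
$$
The inhomogeneous identification is the same, with $\Delta'_{-1}=\Psi\ast$ absorbing the region $|\xi|\lesssim 1$ on which $(1+|\xi|^2)^{s/2}\sim 1$.

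For the embedding chain $\mathring B^s_{q,\min(q,2)}\hookrightarrow\mathring W_q^s\hookrightarrow\mathring B^s_{q,\max(q,2)}$, I would first invoke the Littlewood--Paley characterization $\mathring W_q^s\sim\mathring F^s_{q,2}$, valid for $1<q<\infty$ and any $s\in\mathbb R$ (classical; see for instance \cite{Tri} or \cite{BL}). This is the principal external input; it rests on the Mikhlin--H\"ormander multiplier theorem (or equivalently vector-valued Calder\'on--Zygmund theory applied to the kernel $(\Phi_j)_{j}$), and it is precisely where the restriction $q\ne 1,\infty$ enters. Once this is in hand, everything reduces to comparing the mixed norms $\ell^r(L^q)$ and $L^q(\ell^r)$ applied to the weighted sequence $a_j=2^{js}\Delta_j f$, using two elementary facts: Minkowski's integral inequality, which gives $\|a\|_{L^q(\ell^r)}\le\|a\|_{\ell^r(L^q)}$ when $r\le q$ (with the reverse when $q\le r$), and the sequence-space monotonicity $\|a\|_{\ell^{r_2}}\le\|a\|_{\ell^{r_1}}$ for $r_1\le r_2$.

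I would then do the bookkeeping by cases. When $q\le 2$, Fubini gives $\mathring B^s_{q,q}=\mathring F^s_{q,q}$, and $\ell^q\hookrightarrow\ell^2$ yields $\mathring F^s_{q,q}\hookrightarrow\mathring F^s_{q,2}$, producing the left embedding; the right embedding $\mathring F^s_{q,2}\hookrightarrow\mathring B^s_{q,2}$ is just Minkowski (since $q\le 2$). When $q\ge 2$ the roles reverse: Minkowski gives $\mathring B^s_{q,2}\hookrightarrow\mathring F^s_{q,2}$ directly, and then $\ell^2\hookrightarrow\ell^q$ followed by Fubini gives $\mathring F^s_{q,2}\hookrightarrow\mathring F^s_{q,q}=\mathring B^s_{q,q}$. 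The only nonelementary ingredient is the identification $\mathring W_q^s\sim\mathring F^s_{q,2}$; this is the substantive step, but as it is a well-documented classical result it can simply be cited. The special case $s=0$ of the chain yields the last displayed embedding $\mathring B^0_{q,\min(q,2)}\hookrightarrow L^q\hookrightarrow\mathring B^0_{q,\max(q,2)}$ as an immediate corollary.
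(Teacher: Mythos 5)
Your argument is correct, but there is nothing in the paper to compare it against: this proposition appears in Appendix A as classical background and is stated without proof, with the reader referred to the standard references \cite{BL,Che,RS,Tri}. What you have written is essentially the textbook derivation. The Plancherel computation for $\mathring H^s\sim\mathring B^s_{2,2}$ is complete modulo one small point worth making explicit: the lower bound on $\sum_j|\widehat\Phi_j|^2$ away from the origin follows from $1=\bigl(\sum_j\widehat\Phi_j\bigr)^2\le 3\sum_j\widehat\Phi_j^2$, using that at most three terms are nonzero at any $\xi$. For the embedding chain, you correctly isolate the one substantive input, the Littlewood--Paley characterization $\mathring W^s_q\sim\mathring F^s_{q,2}$ for $1<q<\infty$ (for $s=0$ this is the Littlewood--Paley theorem, resting on Mikhlin--H\"ormander or vector-valued Calder\'on--Zygmund theory), and your case bookkeeping with Minkowski's integral inequality ($\|a\|_{L^q(\ell^r)}\le\|a\|_{\ell^r(L^q)}$ for $r\le q$ and the reverse for $q\le r$), the monotonicity of the $\ell^r$ scales, and the Fubini identity $\mathring B^s_{q,q}=\mathring F^s_{q,q}$ is accurate in both cases $q\le 2$ and $q\ge 2$. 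Since the heavy lifting is entirely in the cited multiplier theorem, your write-up and the paper's bare citation carry the same mathematical content; yours has the advantage of making visible exactly where the restriction $1<q<\infty$ and the exponents $\min\{q,2\}$, $\max\{q,2\}$ come from.
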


\vskip .1in
For notational convenience, we write $\Delta_j$ for
$\Delta'_j$. There will be no confusion if we keep in mind that
$\Delta_j$'s associated the homogeneous Besov spaces is defined in
(\ref{del1}) while those associated with the inhomogeneous Besov
spaces are defined in (\ref{del2}). Besides the Fourier localization operators $\Delta_j$, the partial sum $S_j$ is also a useful notation. For an integer $j$,
$$
S_j \equiv \sum_{k=-1}^{j-1} \Delta_k,
$$
where $\Delta_k$ is given by (\ref{del2}). For any $f\in \mathcal{S}'$, the Fourier transform of $S_j f$ is supported on the ball of radius $2^j$.

\vskip .1in
Bernstein's inequalities is a useful tool on Fourier localized functions and these inequalities trade integrability for derivatives. The following proposition provides Bernstein type inequalities for fractional derivatives.
\begin{prop}\label{bern}
Let $\alpha\ge0$. Let $1\le p\le q\le \infty$.
\begin{enumerate}
\item[1)] If $f$ satisfies
$$
\mbox{supp}\, \widehat{f} \subset \{\xi\in \mathbb{R}^d: \,\, |\xi|
\le K 2^j \},
$$
for some integer $j$ and a constant $K>0$, then
$$
\|(-\Delta)^\alpha f\|_{L^q(\mathbb{R}^d)} \le C_1\, 2^{2\alpha j +
j d(\frac{1}{p}-\frac{1}{q})} \|f\|_{L^p(\mathbb{R}^d)}.
$$
\item[2)] If $f$ satisfies
\begin{equation*}\label{spp}
\mbox{supp}\, \widehat{f} \subset \{\xi\in \mathbb{R}^d: \,\, K_12^j
\le |\xi| \le K_2 2^j \}
\end{equation*}
for some integer $j$ and constants $0<K_1\le K_2$, then
$$
C_1\, 2^{2\alpha j} \|f\|_{L^q(\mathbb{R}^d)} \le \|(-\Delta)^\alpha
f\|_{L^q(\mathbb{R}^d)} \le C_2\, 2^{2\alpha j +
j d(\frac{1}{p}-\frac{1}{q})} \|f\|_{L^p(\mathbb{R}^d)},
$$
where $C_1$ and $C_2$ are constants depending on $\alpha,p$ and $q$
only.
\end{enumerate}
\end{prop}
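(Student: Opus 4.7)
The plan is to reduce both parts to the case $j=0$ by a parabolic rescaling, then realize $(-\Delta)^\alpha f$ as a convolution against an explicit kernel and apply Young's inequality.

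For the rescaling, set $g(x)=f(2^{-j}x)$. Then $\widehat{g}(\xi)=2^{-jd}\widehat{f}(2^{-j}\xi)$ has Fourier support in $\{|\xi|\le K\}$ (respectively in $\{K_1\le|\xi|\le K_2\}$), and from the identity
$$(-\Delta)^\alpha g(x)=2^{-2\alpha j}\,((-\Delta)^\alpha f)(2^{-j}x)$$
together with $\|g\|_{L^p}=2^{jd/p}\|f\|_{L^p}$ one sees that the factor $2^{2\alpha j + jd(1/p-1/q)}$ in the statement is precisely what emerges from rescaling back to frequency level $j$. It therefore suffices to prove the inequalities at $j=0$.

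For Part (1) at $j=0$, pick $\widehat{\Theta}\in C_c^\infty(\mathbb{R}^d)$ with $\widehat{\Theta}\equiv 1$ on $\{|\xi|\le K\}$, so that $f=\Theta * f$ whenever $\widehat{f}$ is supported in $\{|\xi|\le K\}$. Setting $m(\xi)=|\xi|^{2\alpha}\widehat{\Theta}(\xi)$ and $h=\mathcal{F}^{-1}m$, I then have $(-\Delta)^\alpha f=h*f$, and the task is reduced to showing $h\in L^r(\mathbb{R}^d)$ for every $r\in[1,\infty]$. Since $m$ is bounded with compact support, $h$ is smooth and bounded; the crucial decay $|h(x)|\le C(1+|x|)^{-d-2\alpha}$ follows from the standard Fourier asymptotics for compactly supported multipliers with a $|\xi|^{2\alpha}$ singularity at the origin, verified by splitting $m$ into a piece near the origin (handled by direct size estimates on a small ball) and a smooth piece away from the origin (handled by repeated integration by parts). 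Hence $h\in L^1\cap L^\infty$, and by interpolation $h\in L^r$ for all $r\in[1,\infty]$, so Young's convolution inequality with $1/r=1-1/p+1/q$ yields $\|(-\Delta)^\alpha f\|_{L^q}\le\|h\|_{L^r}\|f\|_{L^p}$, completing Part (1). This $L^1$ estimate for $h$ when $2\alpha$ is not an even integer is the main technical point I expect to require the most care.

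The upper bound in Part (2) is contained in Part (1). For the lower bound, the annular support of $\widehat{f}$ makes $|\xi|^{-2\alpha}$ smooth on the relevant region, so I choose $\widehat{\Psi}\in C_c^\infty(\mathbb{R}^d)$ supported in $\{K_1/2\le|\xi|\le 2K_2\}$ and equal to $|\xi|^{-2\alpha}$ on $\{K_1\le|\xi|\le K_2\}$. Then $\Psi\in\mathcal{S}(\mathbb{R}^d)$, and for any $f$ whose Fourier support lies in the annulus, $f=\Psi*(-\Delta)^\alpha f$, so Young's inequality yields $\|f\|_{L^q}\le\|\Psi\|_{L^1}\|(-\Delta)^\alpha f\|_{L^q}$. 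Rescaling to arbitrary $j$ sends $\|\Psi\|_{L^1}$ to $c\cdot 2^{-2\alpha j}$, and rearranging delivers the claimed lower bound $C_1\,2^{2\alpha j}\|f\|_{L^q}\le\|(-\Delta)^\alpha f\|_{L^q}$.
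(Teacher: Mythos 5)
The paper does not prove this proposition at all: it is stated in Appendix~A as a standard fact, with the reader referred to the cited monographs (Bergh--L\"{o}fstr\"{o}m, Chemin, Runst--Sickel, Triebel). So there is nothing to compare line by line; your argument stands or falls on its own. On its own it is the standard textbook proof and the overall structure is sound: dilation reduces to $j=0$ and produces exactly the factor $2^{2\alpha j+jd(\frac1p-\frac1q)}$; Part~(1) follows from writing $(-\Delta)^\alpha f=h*f$ with $h=\mathcal{F}^{-1}\bigl(|\xi|^{2\alpha}\widehat{\Theta}\bigr)$ and Young's inequality; the reverse inequality in Part~(2) follows from $f=\Psi*(-\Delta)^\alpha f$ with $\widehat{\Psi}=|\xi|^{-2\alpha}$ on the annulus, which is legitimate precisely because the annular support keeps $|\xi|^{-2\alpha}$ smooth there.

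Three points deserve correction or more care. First, your scaling identity is mis-stated: with $g(x)=f(2^{-j}x)$ one has $\widehat{g}(\xi)=2^{jd}\widehat{f}(2^{j}\xi)$, not $2^{-jd}\widehat{f}(2^{-j}\xi)$; with your formula the support of $\widehat{g}$ would be $\{|\xi|\le K2^{2j}\}$ rather than $\{|\xi|\le K\}$. The conclusion you draw is the one that follows from the correct formula, so this is a typo, but it should be fixed (and the rescaling is an isotropic dilation, not parabolic). Second, the kernel estimate $|h(x)|\le C(1+|x|)^{-d-2\alpha}$ is the genuine content of Part~(1) when $2\alpha$ is not an even integer, and as sketched it does not quite come out: splitting $m$ at a \emph{fixed} small ball gives only boundedness of the near piece, not decay. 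The split must be made at the $x$-dependent radius $\delta\sim|x|^{-1}$ (or, equivalently, dyadically in $|\xi|$): the near piece is then $O(\delta^{d+2\alpha})=O(|x|^{-d-2\alpha})$ by direct integration, and $N$ integrations by parts on the far piece give $|x|^{-N}\int_{|\xi|\ge\delta}|\xi|^{2\alpha-N}\,d\xi\lesssim|x|^{-d-2\alpha}$ for $N>d+2\alpha$. Third, at $\alpha=0$ the claimed decay $(1+|x|)^{-d}$ is not integrable; that case must be handled separately by noting $h=\Theta\in\mathcal{S}$, which is trivial but should be said. With these repairs the proof is complete and is, in substance, the argument the cited references give.
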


\vskip .1in
Triebel-Lizorkin space is a direct generalization of Lebesgue spaces and Sobolev spaces.
\begin{define}
For any $s\in \mathbb{R}$, $0<p<\infty$ and $0<q\le \infty$, the Triebel-Lizorkin space $F^s_{p,q}$ is defined by
$$
F^s_{p,q} =\left\{f \in \mathcal{S}': \|f\|_{F^s_{p,q}} \equiv \left\| \|2^{sj} \Delta_j f \|_{l^q} \right\|_{L^p}  < \infty \right\}.
$$
\end{define}

\vskip .1in
Some ``classical" function spaces are special cases of the Triebel-Lizorkin spaces.
\begin{prop}
We have the following equivalence relations.
\begin{enumerate}
\item[(1)] For any $1<p<\infty$, $L^p\sim F^0_{p,2}$;
\item[(2)] For any $1<p<\infty$ and $m=1,2,\cdots$, $W^m_p \sim F^m_{p,2}$;
\item[(3)] For any $1\le p<\infty$ and $s\not=$ integers, $W^s_p \sim F^s_{p,p}$.
\end{enumerate}
\end{prop}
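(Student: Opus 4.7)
The plan is to treat the three equivalences in the order stated, with part (1) serving as the foundation on which (2) and (3) are built. The overarching strategy is to use the Littlewood–Paley characterization of $L^p$-spaces together with the frequency-localized behavior of derivatives, plus a separate argument for the non-integer smoothness regime.

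For part (1), $L^p\sim F^0_{p,2}$ for $1<p<\infty$, I would invoke the classical Littlewood–Paley square function theorem, which asserts
\begin{equation*}
\|f\|_{L^p} \;\sim\; \Bigl\| \bigl(\textstyle\sum_{j\ge -1} |\Delta_j f|^2 \bigr)^{1/2} \Bigr\|_{L^p},
\end{equation*}
the right-hand side being exactly $\|f\|_{F^0_{p,2}}$ by definition. The standard proof proceeds by writing the square function as a vector-valued singular integral associated to the Fourier multipliers $\widehat{\Phi}_j$ and then either invoking Calderón–Zygmund theory for Banach-space-valued kernels, or combining Khintchine's inequality with the Marcinkiewicz multiplier theorem applied to random sign combinations $\sum \varepsilon_j \widehat{\Phi}_j$. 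Since this is a textbook result (e.g.\ Stein, Triebel), I would cite it and extract the equivalence constants depending on $p$.

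For part (2), $W^m_p \sim F^m_{p,2}$ with integer $m\ge 1$ and $1<p<\infty$, I would reduce to part (1) by a lifting argument. Using Bernstein-type relations, for any $j\ge 0$ and any multi-index $\alpha$ with $|\alpha|\le m$ one has $\|\Delta_j D^\alpha f\|_{L^p}\sim 2^{j|\alpha|}\|\Delta_j f\|_{L^p}$, so that
\begin{equation*}
\bigl\| \bigl(\textstyle\sum_j 2^{2mj}|\Delta_j f|^2\bigr)^{1/2} \bigr\|_{L^p} \;\sim\; \sum_{|\alpha|\le m} \bigl\| \bigl(\textstyle\sum_j |\Delta_j D^\alpha f|^2\bigr)^{1/2} \bigr\|_{L^p},
\end{equation*}
modulo the low-frequency block which is controlled by $\|f\|_{L^p}$. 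Applying part (1) to each $D^\alpha f$ converts the right-hand side into $\sum_{|\alpha|\le m}\|D^\alpha f\|_{L^p}=\|f\|_{W^m_p}$, while the left-hand side is $\|f\|_{F^m_{p,2}}$. Equivalently, one can route the argument through the Bessel potential: $(I-\Delta)^{m/2}$ is an isomorphism $W^m_p\to L^p$ for integer $m$ and also $F^m_{p,2}\to F^0_{p,2}$, reducing matters to (1).

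For part (3), the non-integer case $W^s_p \sim F^s_{p,p}$ with $1\le p<\infty$, the plan is to identify both sides with the Gagliardo–Slobodeckij (double integral) norm. Writing $s=k+\sigma$ with $k\in\mathbb{Z}_{\ge 0}$ and $\sigma\in(0,1)$, one has by definition
\begin{equation*}
\|f\|_{W^s_p}^p \;=\; \|f\|_{W^k_p}^p + \sum_{|\alpha|=k} \iint \frac{|D^\alpha f(x)-D^\alpha f(y)|^p}{|x-y|^{d+\sigma p}}\,dx\,dy,
\end{equation*}
and the standard Triebel–Lizorkin characterization (see Triebel, \emph{Theory of Function Spaces}) shows this Gagliardo norm is equivalent to $\|f\|_{F^s_{p,p}}$. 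The proof splits each Besov block via $\Delta_j f(x)-\Delta_j f(y)$ and sums a geometric series, with the restriction $\sigma\in(0,1)$ crucial so that the resulting sums in $j$ converge at both ends—this is precisely why integer $s$ is excluded, the integer case actually corresponding to $F^s_{p,2}$ through part (2) rather than $F^s_{p,p}$.

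The main obstacle is part (1): the Littlewood–Paley square function theorem is the genuinely non-trivial analytic input, requiring Calderón–Zygmund theory and failing at the endpoints $p=1,\infty$. Parts (2) and (3) are then relatively mechanical, with the subtlety in (3) being to track why the second index $q=p$ (rather than $q=2$) is the correct one for non-integer $s$, a discrepancy reflecting the fact that $L^p$ and the Gagliardo space $W^{s,p}$ are genuinely different kinds of function spaces for $p\neq 2$.
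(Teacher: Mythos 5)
Your outline is essentially correct, and it reconstructs the standard textbook arguments: the Littlewood--Paley square function theorem for (1), lifting via Bernstein estimates or Bessel potentials for (2), and the identification of the Slobodeckij double-integral norm with the $B^s_{p,p}=F^s_{p,p}$ norm for non-integer $s$ in (3). For comparison, the paper does not prove this proposition at all; it is recorded in Appendix A as background and attributed to the standard references (Bergh--L\"{o}fstr\"{o}m, Chemin, Runst--Sickel, Triebel), so there is no in-paper argument to measure yours against. One small imprecision worth flagging in your part (2): the two-sided relation $\|\Delta_j D^\alpha f\|_{L^p}\sim 2^{j|\alpha|}\|\Delta_j f\|_{L^p}$ is false for a single multi-index $\alpha$ (e.g.\ $\partial_1^m$ gives no lower bound on an annulus piece whose Fourier support hugs the $\xi_2$-axis); the lower bound only holds after summing over all $|\alpha|=m$, using $\sum_{|\alpha|=m}|\xi^\alpha|\gtrsim|\xi|^m$ on the annulus, or one can avoid the issue entirely by routing through the isomorphism $(I-\Delta)^{m/2}\colon F^m_{p,2}\to F^0_{p,2}$ as you suggest. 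With that repair, and your correct observation that (3) holds down to $p=1$ because the Gagliardo-norm comparison does not require the vector-valued Calder\'on--Zygmund machinery, the sketch is a faithful account of the results the paper is citing.
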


\vskip .2in
\section{Proof of Lemma \ref{triple}}
\label{prooflemma}

This appendix provides the proof of Lemma \ref{triple}.
\begin{proof}[Proof of Lemma \ref{triple}]
We first recall that, for any $q\ge 2$,
$$
\|f\|_{L^\infty(\mathbb{R})} \le \sqrt[q]{q}\, \|f\|_{L^{2(q-1)}(\mathbb{R})}^{1-\frac1q}\, \|f_x\|_{L^2(\mathbb{R})}^{\frac1q}.
$$
Therefore, by H\"{o}lder's inequality,
\begin{eqnarray}
&&\hskip-.3in
 \iint_{\mathbb{R}^2} | f \, g\, h|  \;dxdy \nonumber \\
&&
\leq C \int \left[ \left( \int |f|^2 \; dx \right)^{\frac12}
\left( \int |g|^2 \; dx \right)^{\frac12} \left( \sup_{-\infty < x < \infty} h \right) \right]   \;dy  \nonumber\\
&&
\leq C \int \left[ \left( \int |f|^2 \; dx \right)^{\frac12}
\left( \int |g|^2 \; dx \right)^{\frac12}  \left( \int |h|^{2(q-1)}\; dx \right)^{\frac1{2q}}
\left( \int |h_x|^2 \; dx \right)^{\frac1{2q}}  \right]   \;dy \nonumber\\
&&
\leq C\; \|f\|_2  \; \left(\int
\left( \int |g|^2 \; dx \right)^{\frac{q}{q-2}}dy \right)^{\frac{q-2}{2q}}\; \|h\|_{{2(q-1)}}^{1-\frac1q}\;
\|h_x\|_2^\frac1q. \label{e1}
\end{eqnarray}
By Minkowski's inequality,
\begin{eqnarray}
\left(\int
\left( \int |g|^2 \; dx \right)^{\frac{q}{q-2}}dy \right)^{\frac{q-2}{2q}}&\le& \left(\int \left( \int |g|^{\frac{2q}{q-2}} \; dy \right)^{\frac{q-2}{q}}dx \right)^{\frac12} \nonumber\\
&\le& \left(\int \left(\int g^2 dy\right)^{1-\frac1q} \left(\int g_y^2 dy\right)^{\frac1q} dx \right)^{\frac12} \nonumber\\
&\le& \|g\|_2^{1-\frac1q} \, \|g_y\|_2^{\frac1q}. \label{dd}
\end{eqnarray}
Inserting (\ref{dd}) in (\ref{e1}) yields the desired inequality. This completes the proof of Proposition \ref{triple}.
\end{proof}

\vskip .4in
\section*{Acknowledgements}
Cao is partially supported by NSF grant DMS 0709228 and a FIU foundation. Wu is partially supported by NSF grant DMS 0907913 and the Southwestern Bell Foundation at OSU.

\vskip .4in


\begin{thebibliography}{99}
\bibitem{AbHm} H. Abidi and T. Hmidi, On the global well-posedness for Boussinesq system, {\it J. Differential Equations  \bf 233}  (2007),  199--220.

\bibitem{ACW10} D. Adhikari, C.  Cao and  J. Wu, The 2D Boussinesq equations with vertical viscosity and vertical diffusivity, {\it J. Differential Equations \bf 249} (2010), 1078--1088.

\bibitem{ACW11} D. Adhikari, C.  Cao and  J. Wu,  Global regularity results for the 2D Boussinesq equations with vertical dissipation, {\it J. Differential Equations \bf 251} (2011), 1637-1655.

\bibitem{BL} J. Bergh and J. L\"{o}fstr\"{o}m, {\it Interpolation Spaces,
An Introduction}, Springer-Verlag, Berlin-Heidelberg-New York, 1976.

\bibitem{CaDi} J.R. Cannon and E. DiBenedetto,  The initial value problem for the Boussinesq equations with data in $L^{p}$, Lecture Notes in Math., Vol. 771.  Springer, Berlin, 1980, pp. 129--144,

\bibitem{CaoWu} C. Cao and J. Wu, Global regularity for the 2D MHD equations with mixed partial dissipation and magnetic diffusion, {\em Advances in Math. \bf 226} (2011), 1803-1822.

\bibitem{Ch} D. Chae, Global regularity for the 2D Boussinesq equations with partial viscosity terms, {\em Advances in Math.} {\bf 203} (2006), 497-513.

\bibitem{Cha1} D. Chae, S.-K. Kim and H.-S. Nam, Local existence and blow-up criterion of H\"{o}lder continuous solutions of the Boussinesq equations, {\it Nagoya Math. J. \bf 155} (1999), 55-80.

\bibitem{CN} D. Chae and H.-S. Nam, Local existence and blow-up criterion for the Boussinesq equations, {\it Proc. Roy. Soc. Edinburgh Sect. A \bf 127} (1997),  935-946.

\bibitem{Che} J.-Y. Chemin, {\it Perfect incompressible Fluids},
Oxford lecture series in mathematics and its applications 14,
Oxford science publications, Oxford University Press, 1998.

\bibitem{DP1} R. Danchin and M. Paicu,  Existence and uniqueness results for the Boussinesq system with data in Lorentz spaces, {\it Phys. D  \bf 237} (2008),  1444--1460.

\bibitem{DP2} R. Danchin and M. Paicu, Global well-posedness issues for the inviscid Boussinesq system with Yudovich's type data, {\it Comm. Math. Phys. \bf 290} (2009),  1--14.

\bibitem{DP3} R. Danchin and M. Paicu, Global existence results for the anisotropic Boussinesq system in dimension two, arXiv: 0809.4984v1 [math.AP] 19 Sep 2008.

\bibitem{ES} W. E and C. Shu, Samll-scale structures in Boussinesq convection, {\it Phys. Fluids} {\bf 6} (1994), 49-58.

\bibitem{HmKe1} T. Hmidi and S. Keraani, On the global well-posedness of the two-dimensional Boussinesq system with a zero diffusivity, {\it Adv. Differential Equations  \bf 12} (2007),  461--480.

\bibitem{HmKe2} T. Hmidi and S. Keraani,  On the global well-posedness of the Boussinesq system with zero viscosity, {\it Indiana Univ. Math. J.  \bf 58}  (2009),  1591--1618.

\bibitem{HKR1} T. Hmidi, S. Keraani and F. Rousset,    Global well-posedness for a Boussinesq-Navier-Stokes system with critical dissipation, {\it J. Differential Equations \bf 249} (2010), 2147-2174.

\bibitem{HKR2} T. Hmidi, S. Keraani and F. Rousset, Global well-posedness for Euler-Boussinesq system with critical dissipation, {\it Comm. Partial Differential Equations \bf 36} (2011), 420-445.


\bibitem{HL} T. Hou and C. Li, Global well-posedness of the viscous Boussinesq equations, {\em Discrete and Cont. Dyn. Syst.} {\bf 12} (2005), 1-12.

\bibitem{LLT} A. Larios, E. Lunasin and E.S. Titi, Global well-posedness for the 2D Boussinesq system without heat diffusion and with either anisotropic viscosity or inviscid Voigt-$a$ regularization,  arXiv:1010.5024v1 [math.AP] 25 Oct 2010.

\bibitem{Maj} A.J. Majda, \textit{Introduction to PDEs and Waves for the Atmosphere and Ocean}, Courant Lecture Notes in Mathematics {\bf 9}, AMS/CIMS, 2003.

\bibitem{MB} A.J. Majda and A.L. Bertozzi, \textit{Vorticity and Incompressible Flow}, Cambridge University Press, 2001.

\bibitem{MG} A.J. Majda and M.J. Grote, Model dynamics and vertical collapse in decaying strongly stratified flows, {\it Phys. Fluids \bf 9} (1997), 2932-2940.

\bibitem{MX} C. Miao and L. Xue, On the global well-posedness of a class of Boussinesq- Navier-Stokes systems, arXiv:0910.0311 [math.AP] 2 Oct 2009.

\bibitem{Mof}H.K. Moffatt, Some remarks on topological fluid mechanics, in: R.L. Ricca (Ed.), \textit{An Introduction to the Geometry and Topology of Fluid Flows}, Kluwer Academic Publishers, Dordrecht, The Netherlands, 2001, pp. 3-10.

\bibitem{Pe} J. Pedlosky, \textit{Geophysical Fluid Dyanmics}, Springer-Verlag, New York, 1987.

\bibitem{RS} T. Runst and W. Sickel, \textit{Sobolev Spaces of fractional order,
Nemytskij operators and Nonlinear Partial Differential Equations}, Walter de Gruyter, Berlin, New York, 1996.

\bibitem{Tri} H. Triebel, \textit{Theory of Function Spaces II}, Birkhauser Verlag, 1992.
\end{thebibliography}
\end{document}